\newtheorem{proposition}{Proposition}
\newtheorem{lemma}[proposition]{Lemma}
\newtheorem{corollary}[proposition]{Corollary}
\newtheorem{theorem}[proposition]{Theorem}
\theoremstyle{definition}
\newtheorem{example}[proposition]{Example}
\newtheorem{definition}[proposition]{Definition}
\newcommand{\F}{{\mathbb F}}
\newcommand{\Q}{\mathbb{Q}}
\newcommand{\Z}{\mathbb{Z}}
\DeclareMathOperator{\ch}{char}
\newcommand{\rng}[1]{\langle #1 \rangle}
\def\id{\mathrm{id}}
\begin{document}

\title{When does an infinite ring have a finite compressed commuting graph?}

\author[I.-V. Boroja]{Ivan-Vanja Boroja \orcidlink{0000-0003-1836-0381}}
\address{University of Banja Luka, Faculty of Electrical Engineering, Bosnia and Herzegovina}
\email{ivan-vanja.boroja@etf.unibl.org}

\author[D. Kokol Bukov\v sek]{Damjana Kokol Bukov\v{s}ek \orcidlink{0000-0002-0098-6784}}
\address{University of Ljubljana, School of Economics and Business, and Institute of Mathematics, Physics and Mechanics, Ljubljana, Slovenia}
\email{damjana.kokol.bukovsek@ef.uni-lj.si}

\author[N. Stopar]{Nik Stopar \orcidlink{0000-0002-0004-4957}}
\address{University of Ljubljana, Faculty of Civil and Geodetic Engineering, University of Ljublja\-na, Faculty of Mathematics and Physics, and Institute of Mathematics, Physics and Mechanics, Ljubljana, Slovenia}
\email{nik.stopar@fgg.uni-lj.si}

\begin{abstract}
We show that any infinite ring has an infinite nonunital compressed commuting graph. We classify all infinite unital rings with finite unital compressed commuting graph, using semidirect product of rings as our main tool.
As a consequence we also classify infinite unital rings with only finitely many unital subrings.
\end{abstract}

\keywords{commuting graph, compressed commuting graph, infinite ring, semidirect product}
\subjclass[2020]{05C25, 16B99, 16S70}

\maketitle

\section{Introduction}\label{sec:intro}

In recent years there has been a lot of interest in the study of graphs of algebraic structures, constructed from relations on these structures. Such examples are zero-divisor graph \cite{AnLi}, total graph \cite{AnBa}, power graph \cite{AbKeCh}, etc. One of the most studied graphs of algebraic structures is the commuting graph. This is a simple graph whose vertices are all non-central elements of a structure $A$ equipped with a product operation (e.g. ring, group, semigroup, etc.) and where two distinct vertices $a, b$ are connected if they commute in $A$, i.e., if $ab = ba$. The commuting graph was first introduced for groups in \cite{BrFo} in an early attempt towards classification of simple finite groups. It was later extended to rings in \cite{AkGhHaMo04} and other algebraic structures \cite{ArKiKo, WaXi}. Commuting graphs have seen a lot of attention in the last two decades, see for example \cite{AkBiMo, DoD19, DoKoKu18, DoMa24, Ku18}. 

In the paper \cite{BoDoKoSt23} a (unital) compressed commuting graph of a (unital) ring was introduced in order to make the graph smaller,
while still keeping the essence of the commutativity relation. 
The idea of compression is to combine certain vertices of the graph into a single vertex. For this to work, only vertices that are indistinguishable in the non-compressed graph can be combined. 
The compression is defined in such a way that the vertices of compressed commuting graph are in a bijective correspondence with subrings generated by one element. 
This means that the compressed commuting graph
takes into account not only the commuting structure of the ring in question but also the commuting structure of all possible homomorphic images of that ring. See Section~\ref{sec:prelim} for more details.

Since the the aim of compression is to make the graph smaller, it is natural to ask when does an infinite ring have a finite compressed graph.   
For the zero-divisor graph this question was recently considered and answered in \cite{SbZa23}. 
The aim of this paper is to answer the question for the commuting graph, both in the unital and nonunital setting. 
We show that an infinite ring always has an infinite nonunital compressed commuting graph. 
On the other hand, a unital compressed commuting graph of an infinite unital ring can be finite, namely, this happens if and only if the ring is a semidirect product of specific rings. 
The above question is closely related to the question whether an infinite (unital) ring can have only finitely many (unital) subrings generated by one element. Thus, most of our results, although obtained in the framework of graphs, can be understood as purely algebraic results and might therefore be of interest to a more general audience. 

The paper is structured as follows. In Section~\ref{sec:prelim} we recall the definition of (unital) compressed commuting graph and its basic properties. 
In Section~\ref{sec:nonunital} we answer the question discussed above in the nonunital setting. 
Section~\ref{sec:semidirect} is devoted to semidirect product of rings, a construction  crucial for our results. 
The answer to the question in the unital setting is given in Section~\ref{sec:unital}, and in Section~\ref{sec:fields} we apply our results to fields. 
Our findings are illustrated  with several interesting examples.

\section{Preliminaries} \label{sec:prelim}

In the paper \cite{BoDoKoSt23} (unital) compressed commuting graph of a ring is introduced. Here we recall the definitions and their basic properties. 

Let $R$ be a general ring, possibly nonunital. A subring of $R$ generated by an element $a \in R$ will be denoted by $\rng{a}$, i.e., $\rng{a}=\{q(a) ~|~ q \in \Z[x],\, q(0)=0\}$, where $\Z[x]$ denotes the ring of polynomials with integer coefficients.
An equivalence relation $\sim$ on $R$ is defined by $a \sim b$ if and only if $\rng{a}=\rng{b}$, and the equivalence class of an element $a \in R$ with respect to relation $\sim$ is denoted by $[a]$.
The class $[a]$ consists of all single generators of the ring $\rng{a}$.

\begin{definition}\label{def:CCG}
A \emph{compressed commuting graph} of a ring $R$ is an undirected graph $\Lambda(R)$ whose vertex set is the set of all equivalence classes of elements of $R$ with respect to relation $\sim$ and there is an edge between $[a]$ and $[b]$ if and only if $ab=ba$.
\end{definition}

Note that edges in $\Lambda(R)$ are well defined. Central elements of $R$ are not excluded from the graph $\Lambda(R)$ like in the usual commuting graph. Furthermore, loops are allowed in $\Lambda(R)$, in fact, every vertex of $\Lambda(R)$ has a loop.

The mapping $\Lambda$ can be extended to a functor $\Lambda$ from the category $\mathbf{Ring}$ of (possibly nonunital) rings and ring  homomorphisms to the category $\mathbf{Graph}$ of undirected simple graphs that allow loops and graph morphisms. 
For a ring homomorphism $f \colon R \to S$ a graph morphism $\Lambda(f) \colon \Lambda(R) \to \Lambda(S)$ is defined by $\Lambda(f)([r])=[f(r)]$.
The mapping $\Lambda \colon \mathbf{Ring} \to \mathbf{Graph}$ that maps a ring $R$ to the graph $\Lambda(R)$ and a ring morphism $f$ to the graph morphism $\Lambda(f)$ is a functor which preserves embeddings.

A unital version of the above functor for unital rings can be defined as follows.
Let $R$ be a unital ring with identity element $1$.
Let $\rng{a}_1$ denote the subring of $R$ generated by $1$ and $a \in R$. The ring $\rng{a}_1$ will be refered to as the \emph{unital subring} of $R$ generated by $a$.
Note that $\rng{a}_1=\{q(a) ~|~ q \in \Z[x]\}$, where in the evaluation $q(a)$ the constant term of $q$ is multiplied by the identity element $1 \in R$.
An equivalence relation on $R$ is defined by $a \sim_1 b$ if and only if $\rng{a}_1=\rng{b}_1$ and the equivalence class of an element $a \in R$ is denoted by $[a]_1$.

\begin{definition}\label{def:UCCG}
    A \emph{unital compressed commuting graph} of a unital ring $R$ is an undirected graph $\Lambda^1(R)$ whose vertex set is the set of all equivalence classes of elements of $R$ with respect to relation $\sim_1$ and there is an edge between $[a]_1$ and $[b]_1$ of and only if $ab=ba$.
\end{definition}

Similarly as before the mapping $\Lambda^1$ can be extended to a functor $\Lambda^1$ from the category $\mathbf{Ring^1}$ of unital rings and unital ring morphisms to the category $\mathbf{Graph}$.
Here the zero ring $R=0$ is considered as a unital ring with $1=0$.
For any unital ring homomorphism $f \colon R \to S$, where $R$ and $S$ are unital rings, $\Lambda^1(f) \colon \Lambda^1(R) \to \Lambda^1(S)$ is defined by $\Lambda^1(f)([r]_1)=[f(r)]_1$.
The mapping $\Lambda^1 \colon \mathbf{Ring^1} \to \mathbf{Graph}$ that maps a unital ring $R$ to the graph $\Lambda^1(R)$ and a unital ring morphism $f$ to the graph morphism $\Lambda^1(f)$ is a functor which preserves embeddings.

For a general ring $R$, let $m$ be the least positive integer, if it exists, such that $mR=0$. Otherwise let $m=0$. The number $m$ is called the \emph{characteristic} of ring $R$ and denoted by $\ch R$. An identity element can be adjoined to $R$ to form a unital ring $R^1$ as follows.
Note that $R$ is a left $\Z_m$-module, where $\Z_0=\Z$.
Equip the set $R^1=\Z_m \times R$ with componentwise addition and define multiplication in $R^1$ by $(k,a)(n,b)=(kn,na+kb+ab)$.
Then $R^1$ is a unital ring with identity element $(1,0)$.
If $m=p$ is prime then $R^1$ is also an algebra over $\Z_p \cong GF(p)$ with scalar multiplication defined componentwise.
The ring $R$ is an ideal of $R^1$ with the canonical embedding $i \colon R \to R^1$ given by $i(r)=(0,r)$.
The following connection between functors $\Lambda$ and $\Lambda^1$ holds.

\begin{proposition}\label{prop:iso}
    For any ring $R$ (unital or nonunital) we have $\Lambda(R) \cong \Lambda^1(R^1)$, where the isomorphism $\hat{\imath} \colon \Lambda(R) \to \Lambda^1(R^1)$ is given by $\hat{\imath}([a])=[i(a)]_1$.
\end{proposition}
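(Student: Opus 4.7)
The plan is to verify in turn that $\hat{\imath}$ is well-defined on equivalence classes, injective on vertices, surjective on vertices, and that it preserves and reflects edges. The single computational input powering the first three is the identity
\[
    \rng{i(a)}_1 = \Z_m \times \rng{a}, \qquad m = \ch R.
\]
To see this, write a generic element of $\rng{i(a)}_1$ as $q(i(a))$ with $q \in \Z[x]$, split $q = q_0 + p$ with $p(0)=0$, and use that $i(a)^n = (0,a^n)$ for $n \geq 1$ to obtain $q(i(a)) = (q_0 \bmod m,\, p(a))$. As $q$ ranges over $\Z[x]$, the constant $q_0 \bmod m$ and the non-constant part $p$ vary independently, so the image is exactly $\Z_m \times \rng{a}$.

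From this identity, well-definedness and injectivity of $\hat{\imath}$ on vertex sets follow immediately: $\rng{a} = \rng{b}$ iff $\Z_m \times \rng{a} = \Z_m \times \rng{b}$ iff $\rng{i(a)}_1 = \rng{i(b)}_1$. For surjectivity I would take an arbitrary element $(k,c) \in R^1$, note that $(k,c) = k \cdot (1,0) + i(c)$, and conclude $i(c) \in \rng{(k,c)}_1$ and $(k,c) \in \rng{i(c)}_1$, whence $\rng{(k,c)}_1 = \rng{i(c)}_1$ and $[(k,c)]_1 = \hat{\imath}([c])$.

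For edges, since $i$ is a ring homomorphism, $i(a)i(b) - i(b)i(a) = i(ab - ba)$, and since $i$ is injective this vanishes iff $ab = ba$. Hence $[a]$ and $[b]$ are adjacent in $\Lambda(R)$ precisely when $\hat{\imath}([a])$ and $\hat{\imath}([b])$ are adjacent in $\Lambda^1(R^1)$.

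I do not anticipate a serious obstacle: the entire content of the proposition is the single computation of $\rng{i(a)}_1$, after which everything unpacks mechanically. The only bookkeeping subtlety is remembering that integer coefficients in the $\Z_m$-factor collapse modulo $m$, and that the freedom to choose $q_0$ independently of the higher coefficients of $q$ is what decouples the two factors of $\Z_m \times \rng{a}$.
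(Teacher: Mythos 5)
Your argument is correct. The paper itself states Proposition~\ref{prop:iso} without proof (it is recalled from \cite{BoDoKoSt23}), so there is nothing to compare line by line, but your route is the natural one: the identity $\rng{i(a)}_1=\Z_m\times\rng{a}$, which follows from $i(a)^n=(0,a^n)$ and the independence of the constant term $q_0\bmod m$ from the tail $p$, does give well-definedness and injectivity at once; the observation $\rng{(k,c)}_1=\rng{i(c)}_1$ (since $(1,0)$ lies in every unital subring) gives surjectivity; and edge preservation in both directions is immediate from $i$ being an injective homomorphism, with the paper's remark that edges are representative-independent covering the remaining bookkeeping.
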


If $G$ and $H$ are two graphs we denote by $G \vee H$ their join, i.e. the graph with  $V(G \vee H) = V(G) \cup V(H)$ and $E(G \vee H) = E(G) \cup E(H) \cup \{\{a,b\} ~|~ a \in V(G), b \in V(H)\}$, and by $tG$ a disjoint union of $t$ copies of $G$. We also denote by $K_n$ the complete graph on $n$ vertices without any loops and by $K^\circ_n$ the complete graph on $n$ vertices with all the loops.

\section{Nonunital compressed commuting graph}\label{sec:nonunital}

In this section we will prove that an infinite ring always has infinite nonunital compressed commuting graph. In order to prove this, we first need an auxiliary lemma about integral elements in unital rings with nonzero characteristic, which may be of independent interest. 

Recall that an element $a$ of a unital ring $R$ is called \emph{integral} over $\Z$ if there exists a monic polynomial $q \in \Z[x]$ such that $q(a)=0$.
For a polynomial $q \in \Z[x]$ denote by $\delta(q)$ the greatest common divisor of its coefficients.

\begin{lemma}\label{lem:integral}
    Let $R$ be a unital ring with $\ch R \ne 0$ and let $a \in R$. If there exists a polynomial $q\in \Z[x]$ such that $\delta(q)=1$ and $q(a)=0$, then $a$ is integral over $\Z$.
\end{lemma}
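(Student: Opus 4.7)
The plan is to show that the unital subring $\rng{a}_1 \subseteq R$ is finitely generated as a $\Z$-module; once this is known, the standard Cayley--Hamilton / determinant trick applied to the $\Z$-linear endomorphism ``multiplication by $a$'' on $\rng{a}_1$ produces a monic polynomial in $\Z[x]$ annihilating $a$. Since $m = \ch R \neq 0$ forces $m\cdot 1 = 0$ in $R$, together with $q(a)=0$ the evaluation map $\Z[x] \to R$, $x \mapsto a$, factors through a surjection $\Z_m[x]/(\bar q) \twoheadrightarrow \rng{a}_1$, where $\bar q$ denotes the reduction of $q$ modulo $m$. It therefore suffices to prove that $\Z_m[x]/(\bar q)$ is a finitely generated $\Z_m$-module (equivalently a finitely generated $\Z$-module, since $\Z_m$ is finite).

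Writing $m = p_1^{e_1}\cdots p_r^{e_r}$, the Chinese remainder theorem yields
\[
  \Z_m[x]/(\bar q) \;\cong\; \prod_{i=1}^{r} \Z_{p_i^{e_i}}[x]/(\bar q_i),
\]
and the hypothesis $\delta(q)=1$ guarantees that in each factor at least one coefficient of $\bar q_i$ is a unit in $\Z_{p_i^{e_i}}$ (namely the image of a coefficient of $q$ not divisible by $p_i$). This reduces the problem to the local case. Set $S = \Z_{p^e}$ with maximal ideal $\mathfrak m = pS$; write $f = \sum_j c_j x^j \in S[x]$ with some $c_j \in S^\times$, and let $d$ be the largest index with $c_d \in S^\times$, so that $c_j \in \mathfrak m$ for every $j > d$. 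Put $M = S[x]/(f)$ and let $N \subseteq M$ be the $S$-submodule generated by $1, x, \ldots, x^{d-1}$. The key claim is that $M/N = \mathfrak m \cdot (M/N)$; granting this and using $\mathfrak m^e = 0$, the elementary nilpotent-ideal form of Nakayama's lemma (which needs no finite generation: $X = \mathfrak m X$ implies $X = \mathfrak m^e X = 0$) forces $M = N$.

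To establish the claim I argue by induction on $n$ that $x^n \in N + \mathfrak m M$ for every $n \geq 0$. For $n<d$ this is immediate. For $n \geq d$, the relation $x^{n-d} f = 0$ in $M$ rearranges, using that $c_d$ is a unit, to
\[
  x^n \;=\; -c_d^{-1}\sum_{j<d} c_j\, x^{n-d+j} \;-\; c_d^{-1}\sum_{j>d} c_j\, x^{n-d+j}.
\]
The first sum is an $S$-linear combination of $x^{n-d}, \ldots, x^{n-1}$, each lying in $N + \mathfrak m M$ by the induction hypothesis; the second sum lies in $\mathfrak m M$ because $c_d^{-1}c_j \in \mathfrak m$ for $j > d$. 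Thus $x^n \in N + \mathfrak m M$, completing the induction. This yields finite generation of $M$, hence of $\rng{a}_1$, and Cayley--Hamilton finishes the proof. The main obstacle is precisely this Nakayama step: $f$ need not have a unit leading coefficient, so one cannot simply appeal to a division algorithm. The argument works because, after the CRT reduction, one is in a local ring whose maximal ideal is nilpotent — the feature that converts the ``infinite descent'' on $x^n$ into a finite one and makes the CRT reduction essential.
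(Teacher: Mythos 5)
Your proof is correct, but it takes a genuinely different route from the paper's. The paper works directly with the element $a$: for $m=p^n$ it writes $q=ps_1+s_0$ with $s_0$ monic modulo $m$ after normalization, and runs an explicit iterated division algorithm ($n$ rounds, one per power of $p$) to peel off the factors of $p$, ending with the monic relation $s_0(a)+pr_{n-1}(a)=0$; for general $m$ it reduces modulo each $p_i^{n_i}$, obtains monic annihilators $q_i$ of the images of $a$, and glues them into one monic polynomial via a Bezout combination $\sum_i c_im_iq_i(x)x^{d-d_i}$. You instead argue at the level of modules: $\rng{a}_1$ is a quotient of $\Z_m[x]/(\bar q)$, and you prove the latter is a finitely generated (hence finite) $\Z$-module by CRT plus a Nakayama descent over the local ring $\Z_{p^e}$, using the \emph{highest} unit coefficient $c_d$ of $\bar q$ in place of a monic leading coefficient and letting the nilpotency of $\mathfrak m=p\Z_{p^e}$ absorb the higher-degree terms; the determinant trick then yields integrality. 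I checked the details: the strong induction showing $x^n\in N+\mathfrak m M$ is sound (including the degenerate case $d=0$, where $\bar q$ is a unit and $M=0$), the nilpotent-ideal form of Nakayama indeed needs no finite generation, and the Cayley--Hamilton step applies because $\rng{a}_1$ is commutative and contains $1$. The two arguments are really dual views of the same obstruction --- the paper's iterated division is your Nakayama descent made explicit --- but the paper's version is constructive and produces a monic annihilator of degree $\deg s_0$, whereas yours is shorter and more conceptual at the cost of invoking the determinant trick and giving a monic polynomial of larger, less explicit degree.
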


\begin{proof}
    Let $m = \ch R$. If $m=1$, then $R$ is the zero ring and there is nothing to prove.
    So suppose $m >1$.
    
    We first prove the lemma in the case $m=p^n$, where $p$ is a prime and $n$ is a positive integer. 
    Write $q$ in the form $q(x)=p s_1(x)+s_0(x)$ where the coefficients of $s_0$ are relatively prime to $p$. So
    \begin{equation}\label{eq:q}
        p s_1(a)+s_0(a)=0.
    \end{equation}
    Since $\delta(q)=1$, the polynomial $s_0$ is nonzero.
    Furthermore, there exists an integer $k$ such that $ks_0$ has leading coefficient equal to $1$ modulo $p^n=m$.
    Since $ma=0$, we may thus assume that $s_0$ has leading coefficient equal to $1$ (otherwise just multiply $q$ by $k$ and reduce its coefficients modulo $m$).
    Let $r_0(x)=0$ and inductively define polynomials $h_i,r_i \in \Z[x]$ for $i=1,2,\ldots,n-1$ as the quotient and remainder in the division algorithm when dividing polynomial $s_1(x)$ by the polynomial $s_0(x)+pr_{i-1}(x)$, i.e.
    \begin{equation}\label{eq:div_alg}
    s_1(x)=h_i(x)\big(s_0(x)+pr_{i-1}(x)\big)+r_i(x),
    \end{equation}
    where $\deg r_i< \deg (s_0 + pr_{i-1}) = \deg s_0$.
    Note that the division can be performed since $\deg r_{i-1} < \deg s_0$, so the leading coefficient of the polynomial $s_0(x)+pr_{i-1}(x)$ is equal to~$1$.
    Next we use induction to prove the following.
    
    \medskip
    {\bf Claim 1.} $p^{n-i}s_0(a)+p^{n-i+1}r_{i-1}(a)=0$ for all $i=1,2,\ldots,n$.
    
    \medskip
    When $i=1$ we need to prove that $p^{n-1}s_0(a)=0$. If we multiply equality \eqref{eq:q} by $p^{n-1}$ we obtain $m s_1(a)+p^{n-1}s_0(a)=0$, and since $m \cdot 1=0$, this implies $p^{n-1}s_0(a)=0$.
    Suppose the equality in Claim 1 holds for some $i < n$.
    Multiplying equality \eqref{eq:div_alg} by $p^{n-i}$ and inserting $x=a$ we obtain
    $$p^{n-i}s_1(a)=h_i(a)\big(p^{n-i}s_0(a)+p^{n-i+1}r_{i-1}(a)\big)+p^{n-i}r_i(a)=p^{n-i}r_i(a),$$
    where the last equality follows by induction hypothesis.
    Hence,
    $$p^{n-i-1}s_0(a)+p^{n-i}r_i(a)=p^{n-i-1}s_0(a)+p^{n-i}s_1(a)=p^{n-i-1}\big(s_0(a)+ps_1(a)\big)=0$$
    by equality \eqref{eq:q}. This proves the induction step and thus Claim 1.

    \medskip
    Taking $i=n$ in Claim 1 we obtain $s_0(a)+pr_{n-1}(a)=0$. Since $s_0$ is monic and $\deg r_{n-1}<\deg s_0$, this proves that $a$ is integral over $\Z$.

    Now assume $m$ is general and write it as $m=p_1^{n_1}p_2^{n_2}\ldots p_k^{n_k}$, where $p_i$ are primes and $n_i$ are positive integers.
    Let $a_i$ be the image of $a$ under the canonical projection $R \to R/p_i^{n_i}R$.
    Clearly $q(a_i)=0$ and the ring $R/p_i^{n_i}R$ has characteristic $p_i^{n_i}$.
    Hence, by the first part of the proof, $a_i$ is integral over $\Z$.
    Let $q_i \in \Z[x]$ be a monic polynomial such that $q_i(a_i)=0$.
    Since the canonical projection $R \to R/p_i^{n_i}R$ maps $q_i(a)$ to $q_i(a_i)=0$, we infer $q_i(a) \in p_i^{n_i}R$, so that $m_iq_i(a)=0$, where $m_i=m/p_i^{n_i}$.
    The integers $m_1,m_2,\ldots,m_k$ are relatively prime, hence, there exist integers $c_i$ such that $c_1m_1+c_2m_2+\ldots+c_km_k=1$.
    Let $d_i=\deg q_i$ and $d=\max_{i=1,2,\ldots,k} d_i$.
    Then the polynomial $s(x)=\sum_{i=1}^k c_i m_i q_i(x) x^{d-d_i}$ has leading term equal to $x^d$, so it is monic and $s(a)=0$. Thus, $a$ is integral over $\Z$.    
\end{proof}

Although we are interested in nonunital compressed commuting graph in this section, our first result is actually about unital compressed commuting graph of an infinite unital ring, since we will need it in our proofs. However, we will later improve this result in Section~\ref{sec:unital}.

\begin{proposition}\label{prop:inf_ring_1}
    If $R$ is an infinite unital ring, then $|V(\Lambda^1(R))|=|R|$, unless $|R|=\aleph_0$,  $\ch R = 0$, and every element of $R$ is annihilated by some nonzero linear polynomial with coefficients in $\Z$.
\end{proposition}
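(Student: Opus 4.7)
The plan is to identify $V(\Lambda^1(R))$ with the collection of unital one-generator subrings $\rng{a}_1\subseteq R$; as each such subring is a homomorphic image of $\Z[x]$, it is at most countable. For $|R|>\aleph_0$, the partition of $R$ into at-most-countable equivalence classes forces the number of classes to equal $|R|$, giving $|V(\Lambda^1(R))|=|R|$. From now on I assume $|R|=\aleph_0$ and that $|V(\Lambda^1(R))|$ is finite, aiming to deduce both $\ch R=0$ and that every element of $R$ is annihilated by a nonzero linear polynomial in $\Z[x]$.

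Suppose first that $\ch R=\mu\ne 0$. Since a finite union of finite sets cannot equal the infinite $R$, some class $[a]_1$, and hence $\rng{a}_1\supseteq[a]_1$, must be infinite. I would then show that the subrings $\rng{a^n}_1$ for $n=1,2,3,\ldots$ are pairwise distinct, yielding infinitely many vertices $[a^n]_1$ and thus a contradiction. Assuming $\rng{a^m}_1=\rng{a^n}_1$ with $1\le m<n$, one writes $a^m=q(a^n)$ for some $q\in\Z[x]$; the polynomial $P(x)=x^m-q(x^n)\in\Z[x]$ then satisfies $P(a)=0$, and since $m<n\le in$ for all $i\ge 1$, the coefficient of $x^m$ in $P$ is exactly $1$, giving $\delta(P)=1$. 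Lemma~\ref{lem:integral} makes $a$ integral over $\Z$; combined with $\mu\cdot 1=0$ this realizes $\rng{a}_1$ as a finitely generated $\Z/\mu\Z$-module and hence as a finite set, a contradiction.

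Now assume $\ch R=0$ and, for contradiction, that some $a\in R$ is not annihilated by any nonzero linear polynomial of $\Z[x]$—equivalently (using $\ch R=0$), $1,a$ are $\Z$-linearly independent. I would prove the subrings $\rng{ka}_1$ for $k=1,2,3,\ldots$ are pairwise distinct. Split into two sub-cases: either $\{1,a,a^2,\ldots\}$ is a $\Z$-linearly independent set (so $\rng{a}_1\cong\Z[x]$ and $\rng{ka}_1$ corresponds to $\Z[kx]$), or there is a smallest $n\ge 2$ with $a^n\in\Z+\Z a+\cdots+\Z a^{n-1}$ (so $\rng{a}_1$ is a free $\Z$-module of rank $n$ with basis $1,a,\ldots,a^{n-1}$). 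In both sub-cases, any element of $\rng{ka}_1$ expands in this basis with every $a^j$-coefficient for $j\ge 1$ divisible by $k$, because every monomial $(ka)^i$ with $i\ge 1$ carries a factor $k^i$ that persists even after reducing $(ka)^i$ for $i\ge n$ via the relation at degree $n$. Applying this to $ka\in\rng{la}_1$ forces $l\mid k$, and symmetry gives $k=\pm l$. The main obstacle is organizing the second sub-case so that the factor of $k$ genuinely survives the $\Z$-linear reduction of high powers modulo the relation $a^n=d_0+d_1 a+\cdots+d_{n-1}a^{n-1}$.
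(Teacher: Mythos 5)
Your uncountable case and your treatment of $\ch R\ne 0$ are sound; in fact the latter is a nice streamlining of the paper's argument (the paper splits into ``some element non-integral'' and ``all elements integral'', invoking an external theorem on infinite commutative subrings in the second case, whereas your observation that an infinite class forces an infinite $\rng{a}_1$, which is incompatible with integrality in nonzero characteristic, avoids that citation). Your worry about the second sub-case of the $\ch R=0$ argument is also unfounded: once $1,a,\dots,a^{n-1}$ is an honest free basis, every monomial $(ka)^i=k^ia^i$ with $i\ge 1$ reduces modulo the relation with \emph{integer} reduction coefficients, so the factor $k^i$ trivially survives and every $a^j$-coefficient ($j\ge 1$) of an element of $\rng{ka}_1$ is divisible by $k$.

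The genuine gap is that your two sub-cases in the $\ch R=0$ argument do not exhaust the possibilities. The negation of ``$1,a,a^2,\dots$ is $\Z$-linearly independent'' is only that some nontrivial relation $\sum e_ia^i=0$ holds; unless the leading coefficient of that relation is $\pm 1$, this does \emph{not} give any $n$ with $a^n\in\Z+\Z a+\dots+\Z a^{n-1}$. Concretely, take $R=\Z[\tfrac12]\times\Z$ and $a=(-\tfrac12,1)$: one checks that $1,a$ are $\Z$-linearly independent (so $a$ is not annihilated by any nonzero linear polynomial and is a legitimate input to your argument), that $a$ is annihilated by $2x^2-x-1$, and that $a^n=((-\tfrac12)^n,1)$ never lies in $\Z+\Z a+\dots+\Z a^{n-1}$ because its first coordinate has denominator $2^n$. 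So this element falls through both of your sub-cases, and $\rng{a}_1$ is not a finitely generated $\Z$-module, so there is no free basis of powers in which to read off divisibility of coefficients. This is exactly the hard part of the paper's Case~4: there one takes the minimal-degree annihilating polynomial $p$ with leading coefficient $\alpha$, passes to $b=\alpha a$ (which \emph{is} integral over $\Z$), and shows the chain $\rng{b}_1\supseteq\rng{2b}_1\supseteq\rng{4b}_1\supseteq\cdots$ is strictly decreasing by a $2$-adic divisibility argument on the linear coefficient, crucially using the minimality of $\deg p$ to force a would-be relation to be the zero polynomial. Some such normalization step (and an argument that the relevant coefficients are still well defined after it) is missing from your proposal and cannot be skipped.
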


\begin{proof}
    Let $R$ be an infinite unital ring such that $|R| \neq \aleph_0$ or $\ch R \ne 0$ or there exists an element $a \in R$ such that $L(a) \neq 0$ for every nonzero linear polynomial $L \in \Z[x]$.
    Since $|V(\Lambda^1(R))| \leq |R|$, it suffices to prove that $|V(\Lambda^1(R))| \geq |R|$. 
    We consider several cases.

    \medskip
    {\bf Case 1.} $|R|>\aleph_0$.
    For any $a \in R$ we have $[a]_1 \subseteq \rng{a}_1$,
    so that $|[a]_1| \leq |\rng{a}_1| \leq |\Z[x]|=\aleph_0$.
    This implies $|R| \leq |V(\Lambda^1(R))| \cdot \aleph_0 = \max\{|V(\Lambda^1(R))|, \aleph_0\}$ by Axiom of choice. Since $|R|>\aleph_0$, we infer $|V(\Lambda^1(R))| \geq |R|$. 
    
    \medskip
    {\bf Case 2.} $|R|=\aleph_0$, $\ch R \ne 0$, and $R$ contains an element $a$ which is not integral over $\Z$.
    We claim that the chain of subrings
    $$\rng{a}_1 \supseteq \rng{a^2}_1 \supseteq \rng{a^4}_1 \supseteq \rng{a^8}_1 \supseteq \ldots$$
    is strictly decreasing.
    Indeed, if $\rng{a^{2^k}}_1=\rng{a^{2^{k+1}}}_1$, then there exists a polynomial $p \in \Z[x]$ such that $a^{2^k}=p(a^{2^{k+1}})$.
    The polynomial $P(x)=x^{2^k}-p(x^{2^{k+1}})$ satisfies $P(a)=0$ and $\delta(P)=1$, since $x^{2^k}$ is one of its terms.
    Lemma~\ref{lem:integral} implies that $a$ is integral over $\Z$, which contradicts our assumption.
    Hence, the classes $[a]_1,[a^2]_1,[a^4]_1,[a^8]_1,\ldots$ are all different, so that $|V(\Lambda^1(R))| \geq \aleph_0$.
    
    \medskip
    {\bf Case 3.} $|R|=\aleph_0$, $\ch R \ne 0$, and every element of $R$ is integral over $\Z$.
    Let $m= \ch R$.
    By \cite{La72}, $R$ contains an infinite commutative subring $S$.
    Hence, there exists a strictly increasing chain of finite commutative unital subrings of $R$
    $$S_1 \varsubsetneq S_2 \varsubsetneq S_3 \varsubsetneq \ldots \varsubsetneq S.$$
    Indeed, let $S_1=\Z_m$, and for any $k>1$ let $S_k=S_{k-1}[a_k] = \Z_m[a_2,a_3,\ldots,a_k]$ where $a_k \in S \setminus S_{k-1}$.
    These subrings are finite since elements $a_i$ are integral over $\Z$, they commute, and $\Z_m$ is finite. Hence, the chain is infinite since $S$ is infinite.
    It follows that $\Lambda^1(S_k)$ has strictly more vertices than $\Lambda^1(S_{k-1})$, so that $\lim_{k \to \infty} |V(\Lambda^1(S_k))|=\infty$.
    The fact that the functor $\Lambda^1$ preserves embedding implies $|V(\Lambda^1(R))| \geq \aleph_0$.

    \medskip
    {\bf Case 4.} $|R|=\aleph_0$, $\ch R = 0$, and there exists an element $a\in R$ such that $L(a) \neq 0$ for every nonzero linear polynomial $L \in \Z[x]$.
    If $a$ does not satisfy any polynomial equation with coefficients in $\Z$, then we have a strictly decreasing chain of subrings
    $$\rng{a}_1 \varsupsetneq \rng{a^2}_1 \varsupsetneq \rng{a^4}_1 \varsupsetneq \ldots,$$
    which implies $|V(\Lambda^1(R))|\geq \aleph_0$ as in Case 2.
    So we may further assume there is a nonzero polynomial $p \in \Z[x]$ such that $p(a)=0$, and that $p$ has the least degree among all such polynomials. 
    By assumption, $p$ is not linear, so the degree of $p$ must be at least $2$.
    Let the leading term of $p$ be equal to $\alpha x^d$, where $\alpha \in \Z$. Then $\alpha^{d-1}p(x)=P(\alpha x)$ for some monic polynomial $P \in \Z[x]$.
    Hence, $b=\alpha a$ satisfies $P(b)=0$, so it is integral over $\Z$.
    We claim that the chain of subrings
    \begin{equation} \label{eq:chain_2b}
    \rng{b}_1 \supseteq \rng{2b}_1 \supseteq \rng{4b}_1 \supseteq \rng{8b}_1 \supseteq \ldots
    \end{equation}
    is strictly decreasing.
    Suppose on the contrary that $\rng{2^sb}_1=\rng{2^{s+1}b}_1$ for some nonnegative integer $s$.
    Then $2^sb=q(2^{s+1}b)$ for some polynomial $q \in\Z[x]$.
    Note that $q(2^{s+1}x)=2^{s+1}Q(x)+\beta$ for some polynomial $Q \in \Z[x]$ and some $\beta \in \Z$, so that
    $$2^sb=2^{s+1}Q(b)+\beta.$$
    Since polynomial $P$ is monic, we may divide $Q$ by $P$ to obtain
    $Q(x)=h(x)P(x)+r(x)$, where $\deg r<\deg P =\deg p$.
    We then have $Q(b)=r(b)$ and therefore $2^sb=2^{s+1}r(b)+\beta$.
    Recall that $b=\alpha a$, so the last equality implies that the polynomial $t(x)=2^s \alpha x - 2^{s+1}r(\alpha x)-\beta$ annihilates $a$.
    Since its degree is $\deg t \leq \max \{1,\deg r\}<\deg p$, we conclude from the definition of polynomial $p$ that $t$ must be the zero polynomial.
    However, the coefficient of the linear term in $2^{s+1}r(\alpha x)$ is divisible by $2^{s+1}\alpha$ while the coefficient of $2^s \alpha x$ is not, so $t$ cannot be the zero polynomial.
    This contradiction shows that the chain \eqref{eq:chain_2b} is indeed strictly decreasing, and consequently $|V(\Lambda^1(R))|\geq \aleph_0$.
\end{proof}

We can now prove the main theorem of this section.

\begin{theorem}\label{thm:inf_ring}
    If $R$ is an infinite ring, then $|V(\Lambda(R))|=|R|$.
\end{theorem}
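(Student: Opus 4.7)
My plan is to reduce to Proposition~\ref{prop:inf_ring_1} via Proposition~\ref{prop:iso}, and then deal by hand with the single case that Proposition~\ref{prop:inf_ring_1} leaves open. Since $R$ is infinite and $R^1=\Z_{\ch R}\times R$ as a set, we have $|R^1|=|R|$. Combining Propositions~\ref{prop:iso} and~\ref{prop:inf_ring_1} we obtain $|V(\Lambda(R))|=|V(\Lambda^1(R^1))|=|R^1|=|R|$ immediately, unless the exceptional case of Proposition~\ref{prop:inf_ring_1} holds for $R^1$: namely $|R^1|=\aleph_0$, $\ch R^1=0$, and every element of $R^1$ is annihilated by some nonzero linear polynomial with integer coefficients.

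I would first rewrite this exceptional hypothesis as a condition on $R$ itself. For an element $(n,a) \in R^1=\Z \times R$ and a nonzero linear polynomial $\alpha x+\beta\in\Z[x]$, the identity $\alpha (n,a)+\beta (1,0)=(\alpha n+\beta,\alpha a)$ shows that $(n,a)$ is annihilated precisely when $\alpha n+\beta=0$ and $\alpha a=0$; ruling out $\alpha=0$ (which would force $\beta=0$ too) shows this is equivalent to the existence of a nonzero integer $\alpha$ with $\alpha a=0$. Thus the exceptional case translates to the clean statement that $R$ is a countably infinite, additively torsion ring of characteristic zero, and this translation step is the only part I expect to require real care.

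In this remaining case only the lower bound $|V(\Lambda(R))|\geq\aleph_0$ needs proof. The key algebraic observation is that if $a\in R$ has additive order $d$, then $d\cdot a^k=a^{k-1}(da)=0$ for every $k\geq 1$, so every element of $\rng{a}=\{q(a)\mid q\in\Z[x],\,q(0)=0\}$ has additive order dividing $d$. Since $R$ is torsion with $\ch R=0$, the additive orders of its elements cannot be bounded---otherwise a common multiple would annihilate all of $R$---so we can pick a sequence $c_1,c_2,\ldots\in R$ whose additive orders $d_1<d_2<\cdots$ are strictly increasing. For $m<n$ we have $d_n \nmid d_m$, hence $c_n\notin\rng{c_m}$; this forces $\rng{c_m}\neq\rng{c_n}$ and so $[c_m]\neq[c_n]$, yielding $\aleph_0$ pairwise distinct vertices of $\Lambda(R)$, as required.
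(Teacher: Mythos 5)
Your proof is correct. The reduction to the exceptional case of Proposition~\ref{prop:inf_ring_1} via Proposition~\ref{prop:iso}, and the translation of that case into the statement that $R$ is a countably infinite, additively torsion ring of characteristic zero, coincide exactly with the paper's argument. Where you diverge is in finishing off that case. The paper builds the increasing chain of subrings $R_{m_k}=\{r\in R \mid m_k r=0\}$ with $m_k=p_1^{k}p_2^{k-1}\cdots p_k^{1}$, observes that the chain is proper, exhausts $R$, and does not stabilize, and then invokes the fact that $\Lambda$ preserves embeddings to conclude $|V(\Lambda(R))|\ge\lim_{k\to\infty}|V(\Lambda(R_{m_k}))|=\infty$. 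You instead exhibit infinitely many vertices directly: since $d\cdot q(a)=0$ whenever $da=0$ and $q(0)=0$, every element of $\rng{a}$ has additive order dividing that of $a$, so a sequence $c_1,c_2,\ldots$ with strictly increasing additive orders $d_1<d_2<\cdots$ (which exists because the orders are unbounded in a torsion ring of characteristic zero) satisfies $c_n\notin\rng{c_m}$ for $m<n$ (as $d_m<d_n$ forces $d_n\nmid d_m$), whence the classes $[c_n]$ are pairwise distinct. Your route is a bit more elementary, avoiding both the construction of the integers $m_k$ and the appeal to functoriality, and it produces an explicit infinite family of one-generated subrings; the paper's chain argument has the advantage of reusing the same device that appears elsewhere in the text (e.g.\ Case~3 of Proposition~\ref{prop:inf_ring_1}), keeping the proofs uniform. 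Both arguments are complete.
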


\begin{proof}
    Suppose $R$ is an infinite ring. Then $|R^1|=|R|$ by Axiom of choice.
    Propositions~\ref{prop:iso} and \ref{prop:inf_ring_1} imply that $|V(\Lambda(R))|=|V(\Lambda^1(R^1))|=|R^1|=|R|$, unless $|R^1|=\aleph_0$, $\ch R^1 = 0$, and every element of $R^1$ is annihilated by some nonzero linear polynomial with coefficients in $\Z$.
    To finish the proof, let $R$ be a ring such that $R^1$ satisfies all three of the above conditions.
    The first two conditions imply $|R|=\aleph_0$ and $mR \neq 0$ for all positive integers $m$.
    By the last condition, for every $a \in R$, the element $i(a) \in R^1$ is annihilated by some linear polynomial $L(x)=nx+n_0$, with $n,n_0 \in \Z$.
    But $L(i(a))=(n_0,na)$, hence, $na=0$.
    This means that every element of $R$ has finite additive order.
    For each positive integer $k$ let $m_k=p_1^{k}p_2^{k-1}p_3^{k-2}\cdots p_{k-1}^{2}p_k^{1}$, where $p_i$ is the $i$-th consecutive prime number, and denote $R_{m_k}=\{r \in R ~|~ m_kr=0\}$.
    We have an increasing chain
    \begin{equation}\label{eq:chain_Rmk}
    R_{m_1} \subseteq R_{m_2} \subseteq R_{m_3} \subseteq \ldots
    \end{equation}
    of subrings of $R$.
    It follows from the above conditions on $R$ that $R_{m_k} \neq R$ for all $k$, and $\bigcup_{k=1}^\infty R_{m_k} =R$, since every positive integer (the additive order of an element) is a divisor of $m_k$ for some $k$ big enough.
    This implies that the chain \eqref{eq:chain_Rmk} does not stabilize, hence, we conclude that $|V(\Lambda(R))| \geq\lim_{k \to \infty} |V(\Lambda(R_{m_k}))|=\infty$.
\end{proof}

\section{Semidirect product of rings}\label{sec:semidirect}

We now turn our attention to semidirect products of rings, defined bellow, since this will be a crucial construction in the rest of the paper. 

We start by an example of an infinite unital ring with a finite unital compressed commuting graph, which shows that Theorem \ref{thm:inf_ring} cannot be directly generalized to the unital case.

\begin{example} \label{ex:Z1/m}
 Consider the ring $\Z[\frac{1}{m}]$. Theorem \ref{thm:inf_ring} shows that $\Lambda(\Z[\frac{1}{m}])$ is infinite, in fact $\Lambda(\Z[\frac{1}{m}]) = K^{\circ}_{\aleph_0}$ since the ring is commutative.
 On the other hand, the unital compressed commuting graph of $\Z[\frac{1}{m}]$ is finite, namely, as shown below, $\Lambda^1(\Z[\frac{1}{m}]) = K^{\circ}_{2^s}$, where $s$ is the number of prime divisors of $m$. So the unital version of  Theorem \ref{thm:inf_ring} is not true.

 To prove the above claim, let $\{p_1,p_2,\ldots,p_s\}$ be the set of all prime divisors of $m$.
 Every element of $\Z[\frac1m]$ is of the form $a=n/(p_{i_1}^{k_1}p_{i_2}^{k_2}\cdots p_{i_t}^{k_t})$ for some integers $1 \le i_1 <i_2<\ldots<i_t \le s$, some positive integers $k_1,k_2,\ldots,k_t$, and some integer $n$ relatively prime to $p_{i_1}, p_{i_2},\ldots, p_{i_t}$ (here $t=0$ iff the denominator is $1$).
 We claim that $[a]_1=[1/(p_{i_1}p_{i_2}\cdots p_{i_t})]_1$. Denote $q=1/(p_{i_1}p_{i_2}\cdots p_{i_t})$.
 Chose an integer $k$ greater than all of $k_1,k_2,\ldots,k_t$ and note that $a=n(p_1^{k-k_1}p_2^{k-k_2}\cdots p_t^{k-k_t})q^k \in \rng{q}_1$. 
 Since $n$ is relatively prime to $p_{i_1}, p_{i_2},\ldots, p_{i_t}$, there exist integers $u$ and $v$ such that $un+v(p_{i_1}^{k_1}p_{i_2}^{k_2}\cdots p_{i_t}^{k_t})=1$, so that $q=(p_{i_1}^{k_1-1}p_{i_2}^{k_2-1}\cdots p_{i_t}^{k_t-1})(ua+v) \in \rng{a}_1$.
 Hence, $\rng{a}_1=\rng{q}_1$ and $[a]_1=[q]_1$.
 Furthermore, if $q' = 1/(p_{j_1}p_{j_2}\cdots p_{j_{r}})$ for some integers $1 \le j_1 <j_2<\ldots<j_{r} \le s$, then $[q]_1=[q']_1$ if and only if $q=q'$, because the denominator of any reduced fraction in $\rng{q}_1$ can only be divisible by primes in $\{p_{i_1}, p_{i_2},\ldots, p_{i_t}\}$.
 This implies that there are as many vertices in $\Lambda^1(\Z[\frac 1m])$ as there are divisors of $p_1p_2\cdots p_s$, namely, $2^s$.
 \end{example}

Next example shows that the functor $\Lambda^1$ does not behave nicely with respect to direct product of rings. 
 
 \begin{example}
 Consider now a direct product $\Z \times \Z$. Then $\Lambda^1(\Z \times \Z)$ is infinite even though $\Lambda^1(\Z) = K^{\circ}_1$ is finite. 
 Indeed, the element $(1,0) \in \Z \times \Z$ is not a zero of any linear polynomial with coefficients in $\Z$, so by Proposition~\ref{prop:inf_ring_1} the graph $\Lambda^1(\Z \times \Z)$ is infinite. 
\end{example}

We now introduce a \emph{semidirect product} of rings. We mimic the construction that is standard for groups \cite{Ro95} and Lie algebras \cite{Pa07}, but seems to be less standard for rings, at least in the form that we will need here.
Let
$$\begin{tikzcd}[sep=20pt]
& & & Z & \\
0 & I & R & Z & 0
\arrow[from=2-1, to=2-2]
\arrow["f", from=2-2, to=2-3]
\arrow["g", from=2-3, to=2-4]
\arrow[from=2-4, to=2-5]
\arrow["h"', from=1-4, to=2-3]
\arrow["id", from=1-4, to=2-4]
\end{tikzcd}$$
be a commutative diagram of rings and ring morphisms with an exact row. Such a diagram is sometimes called a \emph{split exact sequence}.
Then $I$ can be given a structure of a \emph{two-sided $Z$-ring}, i.e., a $Z$-bimodule which is also a ring and the two structures are connected by
$$z \cdot (x_1x_2)=(z \cdot x_1)x_2, \quad
x_1(z \cdot x_2)=(x_1 \cdot z)x_2, \quad
(x_1x_2) \cdot z=x_1(x_2 \cdot z)$$
for all $z \in Z$ and $x_1,x_2 \in I$.
For any $x \in I$ and $z \in Z$ the scalar multiplications $z\cdot x$ and $x \cdot z$ are defined as the elements of $I$ that satisfy the conditions
$$f(z\cdot x)=h(z)f(x) \qquad \text{and} \qquad f(x\cdot z)=f(x)h(z).$$
It is easy to see that the exactness of the row implies that such elements exist and are unique.
Furthermore, the diagram implies that $f(I)$ and $h(Z)$ are subrings of $R$ with the properties $f(I) \cong I$, $h(Z) \cong Z$, $h(Z)+f(I)=R$ and $f(I) \cap h(Z)=0$.
Note that if $Z$ is a unital ring, its identity element $1_Z$ does not necessarily act as the identity on $I$, neither from the left nor from the right (see also below).

The above allows one to prove that $R$ is isomorphic to the ring $Z \ltimes I=(Z \times I,+,\cdot)$, where the addition is defined componentwise and the multiplication is given by
$$(z_1,x_1)\cdot(z_2,x_2)=(z_1z_2,z_1 \cdot x_2+x_1 \cdot z_2+x_1x_2).$$
We will call $Z \ltimes I$ a \emph{semidirect product} of ring $Z$ and $Z$-ring $I$.
The isomorphism $Z \ltimes I \to R$ is given by $(z,x) \mapsto h(z)+f(x)$.

The semidirect product $Z \ltimes I$ is a unital ring if and only if $Z$ is unital with identity element $1_Z$ and $I$ contains an idempotent $e$ such that the action of $1_Z$ on $I$ is given by
\begin{equation}\label{eq:usp}
    1_Z \cdot x=x-ex \qquad\text{and}\qquad x \cdot 1_Z=x-xe
\end{equation}
for all $x \in I$. In this case the identity element of $Z \ltimes I$ is $(1_Z,e)$.
Indeed, let $(z_0,x_0)$ be the identity element of $Z \ltimes I$. Then
$$(z,0)=(z_0,x_0) \cdot (z,0)=(z_0z,x_0\cdot z)$$
for every $z \in Z$, so that $z_0z=z$. Similarly $zz_0=z$, hence $Z$ is unital with identity element $1_Z=z_0$.
In addition, $x_0 \cdot z=0$ and similarly $z \cdot x_0=0$ for every $z \in Z$. In particular, $x_0 \cdot z_0=0$.
Furthermore,
$$(0,x)=(z_0,x_0)\cdot (0,x)=(0,z_0 \cdot x+x_0x)$$
for all $x \in I$, so that $z_0 \cdot x=x-x_0x$. Similarly, $x \cdot z_0=x-xx_0$.
In particular, $0 = x_0 \cdot z_0 =$ $x_0 - x_0^2$, hence $e=x_0$ is an idempotent.
Conversely, for every $(z,x) \in Z \ltimes I$ we have
\begin{align*}
(1_Z,e) \cdot (z,x) &=(1_Z z,1_Z \cdot x+e\cdot z+ex)=(z,x-ex+e\cdot (1_Z z)+ex)\\
&=(z,x+(e \cdot 1_Z)\cdot z)=(z,x+(e-e^2)\cdot z)=(z,x),
\end{align*}
and similarly $(z,x) \cdot (1_Z,e)= (z,x)$.

Note that the left action of $1_Z$ on $I$ is the projection of $I$ to $(1-e)I=\{x-ex ~|~ x \in I\}$ along $eI$ and the right action of $1_Z$ on $I$ is the projection of $I$ to $I(1-e)=\{x-xe ~|~ x \in I\}$ along $Ie$, i.e., for all $x \in I$ we have
\begin{align*}
   && 1_Z \cdot (ex) &=0, &  (xe) \cdot 1_Z &=0, &&\\
   && 1_Z \cdot (x-ex) &= (x-ex), & (x-xe) \cdot 1_Z &= (x-xe). &&
\end{align*}
If the conditions in \eqref{eq:usp} are satisfied, we will call $Z \ltimes I$ a \emph{unital semidirect product} of unital ring $Z$ and $Z$-ring $I$.
If $R$ is unital and the morphism $g$ is unital, then the isomorphism $Z \ltimes I \to R$ above is also unital, since the identity element is unique.

We remark that, using the notation from Section~\ref{sec:prelim}, we have $R^1 =\Z_m \ltimes R$ with the natural action $\hat{n} \cdot x=x \cdot \hat{n}=x+x+\ldots+x$ ($n$-times) for all $x \in R$ and $\hat{n} \in \Z_m$ with representative $n\in \Z$. 

The main theorem of this section explains why semidirect products are crucial for our considerations. 

\begin{theorem}\label{thm:inf_ring_2}
    If $I$ is a finite $\Z[\frac1m]$-ring, then the unital compressed commuting graph of a unital semidirect product $\Z[\frac1m] \ltimes I$ is finite. In fact, $|V(\Lambda^1(\Z[\frac1m] \ltimes I))| \le |V(\Lambda^1(\Z[\frac1m]))| \cdot |I|$.
\end{theorem}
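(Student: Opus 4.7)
The plan is to exploit the projection $g\colon \Z[\frac1m] \ltimes I \to \Z[\frac1m]$ given by $g(z,x) = z$, which is a unital ring homomorphism and therefore, by functoriality of $\Lambda^1$, induces a graph morphism
\[
\Lambda^1(g)\colon \Lambda^1(\Z[\tfrac1m] \ltimes I) \to \Lambda^1(\Z[\tfrac1m]).
\]
The strategy is to bound each fiber of $\Lambda^1(g)$ by $|I|$ and then sum over the vertices of the base to obtain the desired inequality.

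Fix $[z_0]_1 \in V(\Lambda^1(\Z[\frac1m]))$ together with a distinguished representative $z_0$; for definiteness, one can take the canonical $z_0 = 1/d$ described in Example~\ref{ex:Z1/m}. The fiber of $\Lambda^1(g)$ over $[z_0]_1$ consists of the classes $[(z,x)]_1$ with $\rng{z}_1 = \rng{z_0}_1$. The fiber bound reduces to showing that every class in it admits a representative of the form $(z_0, y)$, since then the map $y \mapsto [(z_0, y)]_1$ from $I$ to the fiber is surjective.

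Given such a class $[(z,x)]_1$, the easy direction is to pick $p \in \Z[t]$ with $p(z) = z_0$ (which exists since $z_0 \in \rng{z}_1$) and to set $(z_0, y_0) := p((z,x))$; then $(z_0, y_0) \in \rng{(z,x)}_1$ yields $\rng{(z_0, y_0)}_1 \subseteq \rng{(z,x)}_1$. The hard part, which I expect to be the main obstacle, is to make a careful choice of $p$ (and hence of $y_0$) so that the reverse inclusion $(z,x) \in \rng{(z_0, y_0)}_1$ also holds. I would use the explicit description of $\rng{z_0}_1 = \Z[\frac1d]$ from Example~\ref{ex:Z1/m}: writing $z = n/d^k$ with $\gcd(n,d) = 1$ and invoking a Bezout identity $un + v d^k = 1$ to produce a \emph{linear} polynomial $p(t) = d^{k-1}u\,t + d^{k-1}v$, then leveraging the fact that $d$ (and, after adjusting the Bezout coefficients modulo $|I|$, also $u$) acts invertibly on the finite $\Z[\frac1m]$-ring $I$. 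This should allow one to extract first $(0, x)$ and then $(z, 0)$ inside $\rng{(z_0, y_0)}_1$ by polynomial manipulations centred on the annihilator $dt-1$ of $z_0$, at which point $(z, x) = (z, 0) + (0, x)$ itself belongs to $\rng{(z_0, y_0)}_1$ and the argument closes.
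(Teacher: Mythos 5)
Your global strategy --- push everything down to $\Lambda^1(\Z[\frac1m])$ via the projection $g$ and bound each fibre by $|I|$ by producing, for every class $[(z,x)]_1$ in the fibre, a representative of the form $(z_0,y)$ --- is exactly the skeleton of the paper's proof, and your ``easy direction'' $\rng{p((z,x))}_1\subseteq\rng{(z,x)}_1$ is fine. The gap is in the hard direction, and the specific mechanism you propose for it does not work. The claimed ``fact'' that $d$ acts invertibly on the finite $\Z[\frac1m]$-ring $I$ is false: in a \emph{unital} semidirect product $1_Z$ acts on $I$ as $x\mapsto x-ex$, not as the identity, and in any case the integer scalars occurring when an integer polynomial is evaluated at $(z,x)$ act on the second coordinate by repeated addition, so what you actually need is that $d$ is invertible modulo the additive exponent of $I$. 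This fails already for $\Z[\frac12]\times\Z_2=\Z[\frac12]\ltimes\Z_2$ (the paper's Example~\ref{ex:directproduct}), where $d=2=\ch I$. Concretely, with $z=n/d^k$, $un+vd^k=1$ and $p(t)=d^{k-1}ut+d^{k-1}v$, you get $(z_0,y_0)=p((z,x))=\bigl(z_0,\,d^{k-1}ux+d^{k-1}ve\bigr)$, and applying the annihilator $dt-1$ of $z_0$ produces the element $\bigl(0,\,u(d^kx-ne)\bigr)$ of $\rng{(z_0,y_0)}_1$; when $d^k$ kills $I$ additively this equals $(0,-une)$, which carries no information about $x$, so your plan to ``extract $(0,x)$'' collapses.

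The missing idea, which is the heart of the paper's argument, is how to control the $I$-component of \emph{nonlinear} polynomial expressions in $(z,x)$: writing $(a,r)^u=(a^u,f_{a,u}(r))$, the paper applies the pigeonhole principle to the finitely many functions $I\to I$ to find $u>v$ with $f_{a,u}=f_{a,v}$, so that $H_a(t)=t^u-t^v$ satisfies $H_a((a,r))=(H_a(a),0)$ \emph{uniformly in} $r$, and then checks that $[H_a(a)]_1=[a]_1$ still holds in $\Z[\frac1m]$. Only after annihilating the second coordinate in this way can one steer towards a representative $(b,r+g_1e)$ over the canonical $b$, and even then mutual generation is not automatic: the paper builds polynomial maps $S_1,S_2$ in both directions and iterates $S_2\circ S_1$ exactly $\ch I$ times to return to $(a,r)$. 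Without something playing the role of $H_a$, no single (let alone linear) Bezout polynomial will give you the reverse inclusion, so as written your proof has a genuine gap at its decisive step.
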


\begin{proof}
Let $a = n/(p_{i_1}^{k_1}p_{i_2}^{k_2}\cdots p_{i_t}^{k_t}) \in \Z[\frac1m]$ be an arbitrary element, where $p_{i_1}, p_{i_2}, \ldots, p_{i_t}$ are some of the prime divisors of $m$, $k_1, k_2, \ldots, k_t$ are positive integers, and $n$ is relatively prime to $p_{i_1}, p_{i_2}, \ldots, p_{i_t}$. Consider the powers of an element $(a,r) \in \Z[\frac1m] \ltimes I$. We have $(a,r)^u = (a^u, r')$, where $r' \in I$ depends on the choice of $a, u$, and $r$. Consider $r'$ as a function of $r$, i.e., define a function $f_{a,u}\colon I \to I$ so that $(a,r)^u = (a^u, f_{a,u}(r))$. Since $I$ is a finite ring, there exist only finitely many functions from $I$ to $I$. So there exist positive integers $v < u$, such that $f_{a,u}(r) = f_{a,v}(r)$ for all $r \in I$.
Let $H_a(x) = x^u - x^v$. We have shown that 
$$H_a((a,r)) = (a^u, f_{a,u}(r)) - (a^v, f_{a,v}(r)) = (H_a(a),0)$$
for all $r \in I$.

We claim that the classes $[H_a(a)]_1$ and $[a]_1$ are equal in $\Z[\frac1m]$. As proven in Example \ref{ex:Z1/m} we have $[a]_1 = [b]_1$, where $b = 1/(p_{i_1}p_{i_2}\cdots p_{i_t})$.
Furthermore, 
$$H_a(a) = \frac{n^u}{p_{i_1}^{uk_1}p_{i_2}^{uk_2}\cdots p_{i_t}^{uk_t}} - \frac{n^v}{p_{i_1}^{vk_1}p_{i_2}^{vk_2}\cdots p_{i_t}^{vk_t}} =
\frac{n^v(n^{u-v} - p_{i_1}^{(u-v)k_1}p_{i_2}^{(u-v)k_2}\cdots p_{i_t}^{(u-v)k_t})}{p_{i_1}^{uk_1}p_{i_2}^{uk_2}\cdots p_{i_t}^{uk_t}}.$$
Since both $n^v$ and $n^{u-v} - p_{i_1}^{(u-v)k_1}p_{i_2}^{(u-v)k_2}\cdots p_{i_t}^{(u-v)k_t}$ are relatively prime to $p_{i_1}, p_{i_2}, \ldots, p_{i_t}$, we obtain also $[H_a(a)]_1 = [b]_1$.

Let $(1,e)$ be the identity element of the ring $\Z[\frac1m] \ltimes I$. If $P \in \Z[x]$ is a polynomial with constant coefficient $p_0$, then $P((a,0)) = (P(a), p_0e)$, since $(a,0)^2 = (a^2, a \cdot 0 + 0 \cdot a + 0^2) = (a^2, 0)$ and similarly for higher powers. Here $p_0e$ means $\underbrace{e + e + \ldots + e}_{p_0\text{-times}}$.

Since $[a]_1 = [b]_1$, there exist polynomials $P_1, P_2 \in \Z[x]$, such that $P_1(a) = b$ and $P_2(b) = a$. Since $[H_a(a)]_1 = [a]_1$, there exists a polynomial $G_1 \in \Z[x]$, such that $G_1(H_a(a)) = a$. Let $K_1(x) = P_1(x) - x$ and $S_1(x) = K_1(G_1(H_a(x))) + x$. Let $r \in I$ be arbitrary element. We have
\begin{align*}
    S_1((a, r)) &= K_1(G_1(H_a((a, r)))) + (a, r) = K_1(G_1((H_a(a), 0))) + (a, r) \\
    &= (K_1(G_1(H_a(a))), g_1e) + (a, r) = (K_1(a) + a, g_1e + r) = (b, g_1e + r),
\end{align*}
where $g_1 \in \Z$ is the constant coefficient of the polynomial $K_1(G_1(x))$. Note that $g_1$ depends only on $a$ and $b$, but is independent of the choice of $r \in I$. 
Similarly as above, let $G_2 \in \Z[x]$, such that $G_2(H_b(b)) = b$, $K_2(x) = P_2(x) - x$ and $S_2(x) = K_2(G_2(H_b(x))) + x$. As above $S_2((b, r)) = (a, g_2e + r)$ for any $r \in I$, where $g_2$ is the constant coefficient of the polynomial $K_2(G_2(x))$.
It follows that
$(S_2 \circ S_1)(a, r) = (a,g_2e + g_1e + r)$.
Let $c = \ch I$ be the characteristic of $I$. We have
\begin{align*}
&\big(\underbrace{(S_2 \circ S_1) \circ\ldots\circ(S_2 \circ S_1)}_{c\text{-times}}\big)(a, r) = (a, c(g_2+g_1)e + r)  = (a, r), \\
&S_1(a, r) = (b, g_1e + r), \text{ and }\\
&\big(\underbrace{(S_2 \circ S_1) \circ\ldots\circ(S_2 \circ S_1)}_{(c-1)\text{-times}} \circ S_2\big)(b, g_1e + r) = (a, r). 
\end{align*}
It follows that $[(a, r)]_1 = [(b, g_1e + r)]_1$ for all $r \in I$.

We have proven that for arbitrary element $(a, r) \in \Z[\frac1m] \ltimes I$ there exist a typical represen\-tative $b$ of the class $[a]_1$ in $\Z[\frac1m]$ and $r' \in I$ (namely $r' = r + g_1e$), such that $[(a, r)]_1 = [(b, r')]_1$.
This means that the number of classes in $\Z[\frac1m] \ltimes I$ is at most $|V(\Lambda^1(\Z[\frac1m])| \cdot |I|$, so the graph $\Lambda^1(\Z[\frac1m] \ltimes I)$ is finite.
\end{proof}

Given a ring it is not always obvious whether it is isomorphic to some semidirect product. To illustrate Theorem~\ref{thm:inf_ring_2} we give an example of a factor ring that is isomorphic to a direct product $\Z[\frac12] \times \Z_2$ and thus has finite unital compressed commuting graph.

\begin{example} \label{ex:directproduct}
Let $J = \big(2(2x-1), x(2x-1)\big)$ be an ideal in $\Z[x]$ and $R = \Z[x]/J$. We want to show that $R \simeq \Z[\frac12] \times \Z_2$.

A typical member of $R$ is $p(x) + J = a_nx^n + a_{n-1}x^{n-1} + \ldots + a_2x^2 + a_1x + a_0 + J$, where $a_n, \ldots, a_2 \in \{0,1\}, a_1 \in \{0,1,2,3\}$, and $a_0 \in \Z$. 
Indeed, since $2x^2 -x \in J$, we can inductively reduce each coefficient to be 0 or 1, staring by the leading one and ending at $a_2$. Since also $4x -2 \in J$, we can reduce the linear coefficient modulo 4.  Let $\tilde{g}\colon \Z[x] \to \Z[\frac12] \times \Z_2$ be a unital homomorphism defined by $\tilde{g}(p) = (p(\frac12), p(0) \mod 2)$. 
Since $J \subset \mathop{\mathrm{ker}} \tilde{g}$, homomorphism $\tilde{g}$ induces $g\colon R \to \Z[\frac12] \times \Z_2$.
Let $p(x) = a_nx^n + a_{n-1}x^{n-1} + \ldots + a_2x^2 + a_1x + a_0$, where $a_n, \ldots, a_2 \in \{0,1\}, a_1 \in \{0,1,2,3\}$, and $a_0 \in \Z$ and suppose $g(p(x)+J) = (0,0)$. We have 
$$p\left(\frac12\right) = \frac{a}{2^n} + \frac{a_1}{2} + a_0.$$ 
If at least one of the coefficients $a_n, \ldots, a_2$ is non-zero, we may assume that $a_n=1$, so $a\in \Z$ is odd. Since $p(\frac12) = 0$, this is not possible, so $a_n= \ldots =a_2=0$.
Now, $p(\frac12) = \frac{a_1}{2} + a_0 =0$, and since $a_0$ is even, it follows $a_1 =0$ and $a_0=0$, thus homomorphism $g$ is injective.
Write an arbitrary element of $\Z[\frac12] \times \Z_2$ as $(\frac{a}{2^n}, b)$, where $a \in \Z$, $n \ge 1$, and $b \in \{0,1\}$, and define a mapping $h\colon \Z[\frac12] \times \Z_2 \to R$ by $h(\frac{a}{2^n}, b) = ax^n + b(2x-1) +J$.
It is straightforward to check that $g(h(\frac{a}{2^n}, b)) = (\frac{a}{2^n}, b)$, so $g$ is a unital ring isomorphism and $h$ is its inverse.
It follows that $R \simeq \Z[\frac12] \times \Z_2$.

Theorem~\ref{thm:inf_ring_2} now guarantees that the unital compressed commuting graph of $R$ is finite and has at most $|V(\Lambda^1(\Z[\frac12]))| \cdot |\Z_2| = 2 \cdot 2 = 4$ vertices. 
Note that $h(1,0) = 2x+J$ since $1 \in \Z[\frac12]$ is interpreted as $\frac22$. 
The vertices are equivalence classes of elements obtained from vertices of  $\Lambda^1(\Z[\frac12])$ and elements of $\Z_2$, namely $[h(1, 0)]_1 = [2x + J]_1$, $[h(\frac12, 0) + J]_1 = [x + J]_1$, $[h(1, 1)]_1 = [4x - 1 + J]_1 = [1 + J]_1$, and $[h(\frac12, 1)]_1 = [3x -1 + J]_1$. But  $\rng{3x - 1 + J}_1 = \rng{x + J}_1 = R$, since $x + J = 3(3x-1) - 1 + J$. Furthermore, $\rng{1 + J}_1 = \{a+J ~|~ a \in \Z\}$ and $\rng{2x + J}_1 =  \{a+J ~|~ a \in \Z\} \cup  \{2x + a+J ~|~ a \in \Z\}$.  So $\Lambda^1(R)$ actually contains only 3 vertices, and since $R$ is commutative, we have $\Lambda^1(R) = K^\circ_3$, the complete graph on 3 vertices with all the loops. 
\end{example}

\section{Unital compressed commuting graph}\label{sec:unital}

In this section we prove the converse of Theorem~\ref{thm:inf_ring_2} and a version of Theorem~\ref{thm:inf_ring} for unital compressed commuting graphs.

\begin{proposition}\label{thm:inf_ring_1}
    If $R$ is an infinite unital ring, then either $|V(\Lambda^1(R))|=|R|$ or $R$ is isomorphic to a unital semidirect product $\Z[\frac1m] \ltimes I$ for some positive integer $m$ and some finite $\Z[\frac1m]$-ring $I$.
\end{proposition}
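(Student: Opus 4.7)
The plan is to invoke Proposition~\ref{prop:inf_ring_1} to reduce to the exceptional case: assuming $|V(\Lambda^1(R))| \neq |R|$ with $R$ infinite, we are forced to $|R|=\aleph_0$, $\ch R = 0$, and every $a \in R$ being annihilated by some nonzero linear polynomial over $\Z$; in particular $|V(\Lambda^1(R))|$ is then finite. Since $\ch R=0$ forces any such annihilator $nx+n_0$ of $a$ to have $n \neq 0$, I would define $\phi \colon R \to \Q$ by $\phi(a)=k/n$ whenever $na=k \cdot 1$, and routinely verify that it is a well-defined unital ring homomorphism. Then $Z:=\phi(R) \subseteq \Q$ is a subring containing $\Z$ and $I:=\ker \phi$ fits into the short exact sequence $0 \to I \to R \to Z \to 0$.

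Next I would show that $I$ is finite and $Z=\Z[\frac{1}{m}]$ for some positive integer $m$. Since $\Z \cdot 1 \cap I = 0$, each $x \in I$ satisfies $\rng{x}_1 = \Z \cdot 1 \oplus \rng{x}$, so if $\rng{x}_1=\rng{y}_1$ for $x,y \in I$ then intersecting with $I$ gives $\rng{x}=\rng{y}$. This yields an injection $V(\Lambda(I)) \hookrightarrow V(\Lambda^1(R))$; if $I$ were infinite, Theorem~\ref{thm:inf_ring} would force $|V(\Lambda(I))|=|I|=\aleph_0$, contradicting finiteness of $V(\Lambda^1(R))$. Likewise, the induced surjection $V(\Lambda^1(R)) \twoheadrightarrow V(\Lambda^1(Z))$ together with the fact that distinct primes $p$ inverted in $Z$ yield distinct classes $[1/p]_1$ (because $\rng{1/p}_1=\Z[\frac{1}{p}]$ and these subrings of $\Q$ are pairwise different) shows that only finitely many primes can be inverted in $Z$, hence $Z=\Z[\frac{1}{m}]$.

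The main step is to produce a ring-morphism splitting $h \colon Z \to R$ of $\phi$. Fix $a \in R$ with $\phi(a)=1/m$; then $ma-1 \in I$, so every $(ma)^N$ lies in the finite multiplicative monoid $1+I \subset R$, and for a suitable $N$ the element $(ma)^N$ is idempotent (every element of a finite monoid has an idempotent power). Using the primary decomposition $I=I^m \oplus I^{m'}$, which is a ring direct sum because $I^m \cdot I^{m'}=0$, write $(ma)^N-1 = u+z$ with $u \in I^m$ and $z \in I^{m'}$. For $N$ large enough $m^N$ annihilates $I^m$, so $(ma)^N \cdot x = m^N a^N \cdot x = a^N(m^N \cdot x)=0$ for each $x \in I^m$; expanding $(1+u+z)x$ and using $z \cdot I^m=0$ gives $ux=-x$, and symmetrically $xu=-x$. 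Therefore $\hat e := -u$ is the identity element of the finite ring $I^m$, and $b := (ma)^N - z = 1-\hat e$ is an idempotent of $R$ that acts as zero on $I^m$ and as identity on $I^{m'}$. Consequently $I \cap bRb = I^{m'}$, and $bRb$ is a unital ring (with identity $b$) fitting into a short exact sequence $0 \to I^{m'} \to bRb \to Z \to 0$.

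A snake lemma applied to this sequence with multiplication by each prime $p \mid m$ (bijective on $I^{m'}$ because its exponent is coprime to $m$, and bijective on $Z$) shows that $pb$ is a unit in $bRb$. Defining $h(1/p):=(pb)^{-1}$ for each $p \mid m$ and extending multiplicatively (the inverses commute because they are central in $bRb$) produces the desired ring morphism $h \colon Z \to R$ with $\phi \circ h = \id_Z$, and the map $(z,x) \mapsto h(z)+x$ identifies $R$ with the unital semidirect product $Z \ltimes I$ whose identity element is $(1_Z, \hat e)$. The main obstacle is the idempotent construction: showing that $I^m$ admits an identity element $\hat e$, so that the naive idempotent $(ma)^N$ can be corrected to $b$. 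This is a structural consequence of the unitality of $R$ combined with the nilpotency of $m$ on $I^m$; once $b$ is in hand, the remainder of the argument is essentially formal.
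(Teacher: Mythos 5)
Your proposal is correct, and while the reduction steps coincide with the paper's (invoking Proposition~\ref{prop:inf_ring_1}, building the homomorphism to $\Q$, showing $I=\ker\phi$ is finite via Theorem~\ref{thm:inf_ring}, and identifying the image as $\Z[\frac1m]$), your construction of the splitting is genuinely different. The paper fixes $r$ with $g(r)=\frac1m$, extracts an idempotent $e_0=(mr)^k$ from the finiteness of $\rng{mr}\cap I$, sets $s=r(mr)^{2k-1}$, and proves that the single-generated subring $S=\rng{s}$ meets $I$ trivially by an explicit polynomial factorization ($p(x)=q(x)(1-mx)x$), so that $g|_S$ is itself the inverse of the desired section. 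You instead obtain the idempotent $(ma)^N$ from the finite monoid $1+I$, correct it using the torsion decomposition $I=I^m\oplus I^{m'}$ to an idempotent $b=1-\hat e$ with $\hat e$ the identity of $I^m$, and then invert $pb$ in the corner ring $bRb$ via the five lemma applied to $0\to I^{m'}\to bRb\to \Z[\frac1m]\to 0$, defining the section through the universal property of localization. Your route trades the paper's elementary but delicate polynomial manipulation for structural arguments (primary decomposition, corner rings, five lemma); it also yields the unital idempotent $\hat e$ of the semidirect product more explicitly. Two small points you should make fully explicit in a final write-up: the exponent $N$ must be chosen simultaneously so that $(ma)^N$ is idempotent and $m^N$ kills $I^m$ (possible since powers of an idempotent power remain that idempotent), and the multiplicative extension of $h$ is cleanest via the presentation $\Z[\frac1m]\cong\Z[x]/(mx-1)$ sending $x$ to $(mb)^{-1}$.
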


\begin{proof}
Assume $R$ is an infinite unital ring such that $|V(\Lambda^1(R))| \neq |R|$.
Proposition~\ref{prop:inf_ring_1} implies that $|R|=\aleph_0$ (hence, $|V(\Lambda^1(R))|<\infty$), $\ch R = 0$, and every element of $R$ is annihilated by some nonzero linear polynomial with coefficients in $\Z$.

The above allows us to define a map $g \colon R \to \Q$ by $g(r)=\frac{a}{b}$ if $r$ is annihilated by the linear polynomial $bx-a \in \Z[x]$.
The fact that $\ch R = 0$ implies $b \neq 0$ for all $r \in R$.
The image $g(r)$ is independent of the choice of the linear polynomial that annihilates $r$. Indeed, if $b_1 x-a_1$ and $b_2x-a_2$ both annihilate $r$, then $b_2a_1=b_1b_2r=b_1a_2$, so $\frac{a_1}{b_1}=\frac{a_2}{b_2}$.
In addition, $g$ is a unital ring morphism, namely, if $r,s \in R$, $bx-a$ annihilates $r$ and $dx-c$ annihilates $s$, then $bd(r+s)=da+bc$ and $bd(rs)=ac$, so $g(r+s)=g(r)+g(s)$ and $g(rs)=g(r)g(s)$. Clearly, $g(1)=1$.
Hence, $g(R)$ is a unital subring of $\Q$.

Clearly, $|V(\Lambda^1(g(R)))| \leq |V(\Lambda^1(R))|<\infty$, which implies that $g(R)$ is finitely generated as a unital ring.
Hence, $g(R)=\Z[\frac{a_1}{b_1},\frac{a_2}{b_2},\ldots,\frac{a_n}{b_n}]$ for some $a_i,b_i \in \Z$, where $a_i$ and $b_i \neq 0$ are relatively prime for each $i$.
It follows that $g(R)=\Z[\frac1m]$, where $m$ is the least common multiple of $b_1,b_2,\ldots,b_n$.
So we have a surjective unital ring morphism $g \colon R \to \Z[\frac1m]$.

Let $I$ be the kernel of $g$ and denote the canonical inclusion of $I$ into $R$ by $f$. Then we have an exact sequence
\begin{equation}\label{eq:exact}
\begin{tikzcd}[sep=20pt]
0 & I & R & \Z[\frac1m] & 0 \ .
\arrow[from=1-1, to=1-2]
\arrow["f", from=1-2, to=1-3]
\arrow["g", from=1-3, to=1-4]
\arrow[from=1-4, to=1-5]
\end{tikzcd}
\end{equation}
Since $\ch R = 0$, we may regard $\Z$ as a subring of $R$.
So $\Z+I$ is a unital subring of $R$. Since $g$ is unital, its restriction to $\Z$ is the identity map so that $\Z \cap I=0$. 

If $k \in \Z$ and $a \in I$, the ring $\rng{k+f(a)}_1$ is generated by $k+f(a)$ and $1$, thus also by $f(a)$ and $1$.
Hence, it contains $\Z\cdot 1 +\rng{f(a)}=\Z + \rng{f(a)} \subseteq \Z+I$. But the latter is clearly a unital subring of $R$, therefore $\rng{k+f(a)}_1=\Z + \rng{f(a)}=\Z\cdot 1 + f(\rng{a})=\rng{f(a)}_1$.
This implies that the map $\hat{f} \colon \Lambda(I) \to \Lambda^1(\Z+I)$ given by $\hat{f}([a])=[f(a)]_1$ is a well defined bijection on the sets of vertices.
Furthermore, for any $a,b \in I$ we have $ab=ba$ if and only if $f(a)f(b)=f(b)f(a)$. Hence, $\hat{f}$ is a graph isomorphism.
This implies that $\Lambda^1(\Z+I) \cong \Lambda(I)$.
Hence, $|V(\Lambda(I))|=|V(\Lambda^1(\Z+I))| \leq |V(\Lambda^1(R))|<\infty$ and the ring $I$ is finite by Theorem~\ref{thm:inf_ring}.

Choose an element $r \in R$ such that $g(r)=\frac1m$. Then $mr-1 \in I$.
Since $(mr-1)^1$, $(mr-1)^2$, $(mr-1)^3,\ldots \in I$ and $I$ is finite, we infer $(mr-1)^u=(mr-1)^v$ for some positive integers $u>v$.
This implies that $mr$ is integral of degree $\leq u$.
In addition, $mr-1$ must have finite additive order, say $n$, so that $n(mr)=n \in \Z$.
Putting the two observations together we infer
$\rng{mr} \subseteq \Z+M$ where
$$M=\Big\{n_1(mr)+n_2(mr)^2+\ldots+n_{u-1}(mr)^{u-1} ~|~ 0 \leq n_1,n_2,\ldots,n_{u-1} <n\Big\}.$$
Since $(mr)^2,(mr)^{2^2},(mr)^{2^3},\ldots \in \rng{mr}$ and the set $M$ is finite we must have $(mr)^{2^j}-(mr)^{2^i} \in \Z$ for some positive integers $j>i$.
On the other hand $(mr)^{2^j}-(mr)^{2^i} \in I$, since $g((mr)^{2^j})=1=g((mr)^{2^i})$, so we conclude that $(mr)^{2^j}-(mr)^{2^i}=0$, because $\Z \cap I=0$.
This implies that the element $e_0=(mr)^k$ with $k=2^j-2^i>0$ is an idempotent in $R$. Indeed,
$$e_0^2=(mr)^{2k}=(mr)^{2^j+(2^j-2\cdot 2^i)}=(mr)^{2^j}(mr)^{2^j-2\cdot 2^i}=(mr)^{2^i}(mr)^{2^j-2\cdot 2^i}=(mr)^{2^j-2^i}=e_0,$$
since $2^j-2\cdot 2^i \geq 0$ (here and in the rest of the proof it should be understood that in a calculation a factor with exponent $0$ is omitted).

Now let $s=r(mr)^{2k-1}$ and define a subring $S=\rng{s}$. We have $g(s)=\frac1m$. Note that $e_0=e_0^2=ms$, hence $e_0 \in S$.
In addition, $s=r(mr)^{2k-1}=(r(mr)^{k-1})e_0=e_0(r(mr)^{k-1})$, so $se_0=s=e_0s$, thus $e_0$ is an identity element in $S$.

We want to prove that $S \cap I=0$. Suppose $a \in S \cap I$.
Then $a=p(s)$ for some polynomial $p \in \Z[x]$ with $p(0)=0$.
Furthermore, applying $g$ to $a$ we obtain $p(\frac1m)=0$.
Denote $d=\deg p$ and define a polynomial $P(x)=x^dp(\frac1x) \in \Z[x]$.
Then $P(m)=0$, so $P(x)=H(x)(x-m)$ for some $H \in \Z[x]$ with $\deg H < \deg P \leq \deg p=d$.
We thus obtain $p(x)=x^dP(\frac1x)=x^{d-1}H(\frac1x)(1-mx)=h(x)(1-mx)$, where $h(x)=x^{d-1}H(\frac1x) \in \Z[x]$.
Furthermore, $h(0)=0$ since $p(0)=0$, so $h(x)=q(x)x$ for some $q \in \Z[x]$ and $p(x)=q(x)(1-mx)x$.
Thus, we get $a=p(s)=q(s)(1-ms)s=q(s)(1-e_0)s=q(s)(s-s)=0$. So $S \cap I=0$.

This implies that $g|_S$, the restriction of $g$ to $S$, is injective. Furthermore, it is also surjective since $g(s)=\frac1m$ and $\Z[\frac1m]$ is generated by $\frac1m$ as a ring, and $g|_S$ is unital since $g(e_0)=g(ms)=mg(s)=1$.
So $g|_S \colon S \to \Z[\frac1m]$ is a unital ring isomorphism.
Define $h \colon \Z[\frac1m] \to R$ by $h(w)=(g|_S)^{-1}(w)$. Then $h$ is a unital ring morphism that extends the exact sequence \eqref{eq:exact} to a commuting diagram
$$\begin{tikzcd}[sep=20pt]
& & & \Z[\frac1m] & \\
0 & I & R & \Z[\frac1m] & 0 \ .
\arrow[from=2-1, to=2-2]
\arrow["f", from=2-2, to=2-3]
\arrow["g", from=2-3, to=2-4]
\arrow[from=2-4, to=2-5]
\arrow["h"', from=1-4, to=2-3]
\arrow["id", from=1-4, to=2-4]
\end{tikzcd}$$
We conclude that $R \cong \Z[\frac1m] \ltimes I$.
\end{proof}

To illustrate the construction in the proof of Proposition~\ref{thm:inf_ring_1} we give an example. 

\begin{example}
Let $J = \big(3(2x-1), (2x-1)^2\big)$ be an ideal in $\Z[x]$ and $R = \Z[x]/J$. We want to show that $R \simeq \Z[\frac12] \ltimes I$, where $I$ is the ring with 3 elements and zero multiplication. 
Here, $\frac12 \in \Z[\frac12]$ acts from both sides on $I$ as $i \mapsto -i$ for every $i\in I$.

A typical member of $R$ is $p(x) + J = a_nx^n + a_{n-1}x^{n-1} + \ldots + a_2x^2 + a_1x + a_0 + J$, where $a_n, \ldots, a_2 \in \{0,1\}, a_1 \in \{0,1,2,3,4,5\}$, and $a_0 \in \Z$. 
Indeed, since $3x(2x-1) - (2x-1)^2 = 2x^2+x-1\in J$, we can inductively reduce each coefficient to be 0 or 1, staring by the leading one and ending at $a_2$. Since also $6x -3 \in J$, we can reduce the linear coefficient modulo 6. Let $\tilde{g}\colon \Z[x] \to \Z[\frac12]$ be a unital homomorphism which maps $x \mapsto \frac12$. Since $J \subset \mathop{\mathrm{ker}} \tilde{g}$, homomorphism $\tilde{g}$ induces $g\colon R \to \Z[\frac12]$. 
Note that for any $p(x) \in \Z[x]$ we have $g(p(x)+J) = p(\frac12)$. Furthermore, the kernel of $g$ equals $\mathop{\mathrm{ker}} g = \{J, 2x-1 + J, 4x-2 + J\}$, which we denote by $I$. 
Indeed, let $p(x) = a_nx^n + a_{n-1}x^{n-1} + \ldots + a_2x^2 + a_1x + a_0$, where $a_n, \ldots, a_2 \in \{0,1\}, a_1 \in \{0,1,2,3,4,5\}$, and $a_0 \in \Z$ and suppose $p(\frac12) = 0$. Similarly as in Example~\ref{ex:directproduct} we obtain $a_n= \ldots =a_2=0$ and $p(\frac12) = \frac{a_1}{2} + a_0 =0$. It follows that $a_1 =0$ and $a_0=0$, or $a_1=2$ and $a_0=-1$, or $a_1=4$ and $a_0=-2$, so $p(x) = 0$ or $p(x) = 2x-1$ or $p(x) = 4x-2$. Note that $I$ is the ring containing 3 elements with usual addition and zero multiplication. 

We now need to find a homomorphism $h\colon \Z[\frac12] \to R$, such that $g \circ h = \id_{\Z[\frac12]}$. 
Following the ideas from the proof of Proposition~\ref{thm:inf_ring_1}, let $r = x + J \in R$. Note that $g(r) = \frac12$. Furthermore, $(2x)^4 + J = 2x + J$, so let $k=3$ and $s = r(2r)^{2k-1} = x(2x)^5 + J = 3x-1 + J$. Let $S = \rng{s} \subseteq R$ and note that $e_0 = (2r)^k = (2x)^3 + J = 1 + J$ is the identity element of $S$. The same proof as for the Proposition~\ref{thm:inf_ring_1} shows that $g|_S \colon S \to \Z[\frac12]$ is a unital ring isomorphism and that $R \simeq \Z[\frac12] \ltimes I$. Let $h$ be the inverse of $g|_S$. 
Since $g(s) = \frac12$, we have $h(\frac12) = s = 3x-1 + J$.
Note that $2(3x-1) + J = 1 + J$. So any element of $S$ can be written as a polynomial in $3x-1 +J$, where all the coefficients except the constant one can be reduced modulo 2. A typical member of $S$ is $p(x) + J = a_n(3x-1)^n + a_{n-1}(3x-1)^{n-1} + \ldots + a_2(3x-1)^2 + a_1(3x-1) + a_0 + J$, where $a_n, \ldots, a_1 \in \{0,1\}$ and $a_0 \in \Z$. Now, $h(\frac12)(2x-1 + J) = (3x-1 + J)(2x-1 + J) = 4x-2 + J$ and $h(\frac12)(4x-2 + J) = (3x-1 + J)(4x-2 + J) = 2x-1 + J$, so $\frac12 \in \Z[\frac12]$ acts from both sides on $I$ as $i \mapsto -i$ for every $i\in I$.

Theorem~\ref{thm:inf_ring_2} now guarantees that unital compressed commuting graph of $R$ is finite and has at most $|V(\Lambda^1(\Z[\frac12]))| \cdot |I| = 2 \cdot 3 = 6$ vertices. The vertices are equivalence classes of elements obtained from vertices of  $\Lambda^1(\Z[\frac12])$ and elements of $I$, namely 
\begin{align*}
    [h(1) + J]_1 &= [1 + J]_1, \\
    \textstyle[h(\frac12) + J]_1 &= [3x - 1 + J]_1, \\
    [h(1) + 2x - 1 + J]_1 &= [2x + J]_1, \\
    \textstyle[h(\frac12) + 2x - 1 + J]_1 &= [5x - 2 + J]_1, \\
    [h(1) + 4x - 2 + J]_1 &= [4x-1 + J]_1, \text{ and} \\
    \textstyle[h(\frac12) + 4x - 2 + J]_1 &= [7x - 3 + J]_1 = [x + J]_1.
\end{align*}
But  $\rng{5x - 2 + J}_1 = \rng{x + J}_1 = R$, since $x + J = 5(5x-2) -2 + J$. Furthermore, 
\begin{align*}
  \rng{1 + J}_1 &= \{a+J ~|~ a \in \Z\}, \ \ \ \rng{3x-1 + J}_1 = S, \text{ and} \\
  \rng{2x + J}_1 &= \rng{4x-1 + J}_1 =  \{a+J ~|~ a \in \Z\} \cup  \{2x + a+J ~|~ a \in \Z\} \cup  \{4x + a+J ~|~ a \in \Z\}.
\end{align*}
So $\Lambda^1(R)$ actually contains only $4$ vertices, and since $R$ is commutative we have $\Lambda^1(R) = K^\circ_4$, the complete graph on $4$ vertices with all the loops. 
\end{example}

In next example we consider an infinite noncommutative ring with finite unital compressed commuting graph, which also shows that the bound in Theorem~\ref{thm:inf_ring_2} is tight.

\begin{example}\label{ex:T_2}
    Let $\mathcal{T}_2(GF(2))$ denote the ring of all $2 \times 2$ upper-triangular matrices over $GF(2)$ and let $R=\Z \ltimes \mathcal{T}_2(GF(2))$, where $1 \in \Z$ acts as the identity on $\mathcal{T}_2(GF(2))$ from both sides.
    First, note that $\Lambda^1(R) \cong \Lambda(\mathcal{T}_2(GF(2)))$ (the argument is the same as the argument in the paragraph after equation~\eqref{eq:exact} in the proof of Proposition~\ref{thm:inf_ring_1}).
    Since $\mathcal{T}_2(GF(2))$ is a subring of $\mathcal{M}_2(GF(2))$, the ring of all $2 \times 2$ matrices over $GF(2)$, its compressed commuting graph $\Lambda(\mathcal{T}_2(GF(2)))$ is a subgraph of $\Lambda(\mathcal{M}_2(GF(2)))$, which was computed in \cite[Theorem~22]{BoDoKoSt23}. The graph $\Lambda(\mathcal{M}_2(GF(2)))$ has $15$ vertices by \cite[Corollary~23]{BoDoKoSt23}, so only two matrices are compressed into a single vertex. These are the matrices with irreducible characteristic polynomial, so they are not upper triangular.
    It follows that $\Lambda^1(R) \cong \Lambda(\mathcal{T}_2(GF(2)))$ has $8$ vertices and is isomorphic to $K_2^\circ \vee (3K_2^\circ)$, see Figure~\ref{fig:upper}.
    Note that $|V(\Lambda^1(R))|=|V(\Lambda^1(\Z))| \cdot |\mathcal{T}_2(GF(2))|$, so the bound in Theorem~\ref{thm:inf_ring_2} is tight.
\end{example}

\begin{figure}
    \centering
    \includegraphics{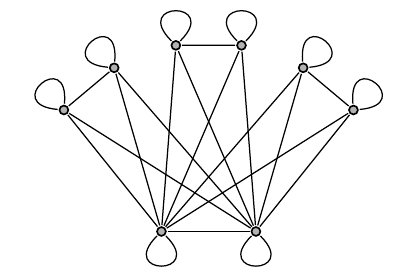}
    \caption{The unital compressed commuting graph of ring $R=\Z \ltimes \mathcal{T}_2(GF(2))$ from Example~\ref{ex:T_2}.}
    \label{fig:upper}
\end{figure}

We collect our findings from Proposition~\ref{thm:inf_ring_1} and Theorem~\ref{thm:inf_ring_2} in the main theorem of this section.

\begin{theorem}\label{thm:inf_ring_3}
    If $R$ is an infinite unital ring, then either $|V(\Lambda^1(R))|=|R|$ or $R$ is isomorphic to a unital semidirect product $\Z[\frac1m] \ltimes I$ for some positive integer $m$ and some finite $\Z[\frac1m]$-ring $I$. In the later case we have $|R|=\aleph_0$ and $|V(\Lambda^1(R))|<\infty$.
\end{theorem}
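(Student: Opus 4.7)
The plan is to obtain Theorem~\ref{thm:inf_ring_3} by simply assembling the two preceding results of this section, Proposition~\ref{thm:inf_ring_1} and Theorem~\ref{thm:inf_ring_2}, into a single dichotomy statement. Proposition~\ref{thm:inf_ring_1} already supplies the top-level alternative: applied to an arbitrary infinite unital ring $R$, it yields either $|V(\Lambda^1(R))|=|R|$, in which case we are done, or a concrete decomposition $R \cong \Z[\frac{1}{m}] \ltimes I$ for some positive integer $m$ and some finite $\Z[\frac{1}{m}]$-ring $I$.

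In the latter case, two additional claims remain to be verified. The cardinality assertion $|R|=\aleph_0$ is immediate from the fact that the underlying set of $\Z[\frac{1}{m}] \ltimes I$ is $\Z[\frac{1}{m}] \times I$, which is countably infinite since $\Z[\frac{1}{m}]$ is a countable subring of $\Q$ and $I$ is finite (and $R$ is assumed infinite, ruling out the degenerate $|R|<\aleph_0$ option). The finiteness of the graph, $|V(\Lambda^1(R))|<\infty$, is precisely the content of Theorem~\ref{thm:inf_ring_2}, which furthermore supplies the quantitative bound $|V(\Lambda^1(R))|\le |V(\Lambda^1(\Z[\frac{1}{m}]))| \cdot |I|$. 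Combined with the computation in Example~\ref{ex:Z1/m}, this becomes the explicit $|V(\Lambda^1(R))|\le 2^s \cdot |I|$, where $s$ is the number of distinct prime divisors of $m$, although the explicit bound is not needed for the theorem itself.

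I do not anticipate any serious obstacle here, since the substantive content has already been established: Proposition~\ref{thm:inf_ring_1} extracts a semidirect decomposition whenever the unital compressed commuting graph is ``too small'', and Theorem~\ref{thm:inf_ring_2} deduces a finite graph from any such decomposition. The only point that deserves a brief mention in the write-up is that these two arrows point in opposite directions and therefore the two alternatives of the theorem are genuinely mutually exclusive: in the first alternative $|V(\Lambda^1(R))|=|R|$ is infinite, while in the second alternative $|V(\Lambda^1(R))|$ is finite and strictly smaller than $|R|=\aleph_0$. This confirms the clean ``either/or'' phrasing and completes the proof.
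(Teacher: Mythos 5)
Your proposal is correct and matches the paper exactly: the paper states Theorem~\ref{thm:inf_ring_3} as a direct consolidation of Proposition~\ref{thm:inf_ring_1} (which gives the dichotomy) and Theorem~\ref{thm:inf_ring_2} (which gives $|V(\Lambda^1(R))|<\infty$ in the semidirect-product case), with the countability of $\Z[\frac1m]\ltimes I$ being immediate. Nothing is missing.
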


Since every vertex in the (unital) compressed commuting graph of a (unital) ring $R$ corresponds to some (unital) subring of $R$ generated by one element and vice versa, Theorems~\ref{thm:inf_ring} and \ref{thm:inf_ring_3} imply the following.

\begin{corollary}
    Every infinite ring has infinitely many subrings generated by one element.
\end{corollary}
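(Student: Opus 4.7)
The proof will be essentially a one-line deduction from Theorem~\ref{thm:inf_ring}, so the plan is really just to unwind definitions. The key observation is the bijective correspondence, implicit in the definition of $\Lambda(R)$, between vertices $[a]$ of the (nonunital) compressed commuting graph and subrings $\rng{a}$ of $R$ generated by a single element: by construction $[a]=[b]$ if and only if $\rng{a}=\rng{b}$, so the map $[a] \mapsto \rng{a}$ is a well-defined bijection from $V(\Lambda(R))$ onto the collection of all subrings of $R$ generated by one element.

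Given this correspondence, the plan is to invoke Theorem~\ref{thm:inf_ring} directly. If $R$ is an infinite ring, then $|V(\Lambda(R))|=|R|$, which is infinite. Hence the set of subrings of $R$ of the form $\rng{a}$ for $a \in R$ is infinite, which is exactly the statement of the corollary.

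There is no real obstacle here, since the hard analytic work (splitting into the four cases based on cardinality, characteristic, and integrality in Proposition~\ref{prop:inf_ring_1}, together with the additive-order argument concluding Theorem~\ref{thm:inf_ring}) has already been done. The only point worth emphasizing in the write-up is that the corollary concerns \emph{nonunital} subrings generated by one element, which is why it is Theorem~\ref{thm:inf_ring} and not Theorem~\ref{thm:inf_ring_3} that applies unconditionally; otherwise Example~\ref{ex:Z1/m} would be a counterexample in the unital setting.
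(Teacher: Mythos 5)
Your proposal is correct and matches the paper's own argument exactly: the paper likewise observes that vertices of $\Lambda(R)$ are in bijective correspondence with one-generated subrings and then invokes Theorem~\ref{thm:inf_ring}. Your remark distinguishing the nonunital case from the unital one (where $\Z[\frac{1}{m}]$ would be a counterexample) is a correct and worthwhile clarification.
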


\begin{corollary}
    An infinite unital ring has only finitely many unital subrings generated by one element if and only if it is isomorphic to a unital semidirect product $\Z[\frac 1m] \ltimes I$ for some positive integer $m$ and some finite $\Z[\frac 1m]$-ring $I$.
\end{corollary}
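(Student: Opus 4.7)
The plan is to deduce this corollary directly from Theorem~\ref{thm:inf_ring_3} by translating graph-theoretic information about $\Lambda^1(R)$ into algebraic information about unital subrings of $R$. The translation is built into the definition of the equivalence relation $\sim_1$: by construction, the class $[a]_1$ consists of precisely all single generators of the unital subring $\rng{a}_1$, so the map $[a]_1 \mapsto \rng{a}_1$ is a well-defined bijection from $V(\Lambda^1(R))$ onto the set of all unital subrings of $R$ generated by one element. Consequently, ``$R$ has only finitely many unital subrings generated by one element'' is the same statement as ``$|V(\Lambda^1(R))| < \infty$''.

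With this bijection in hand, the corollary reduces to a direct invocation of our earlier results. For the forward direction, assume $R$ is infinite and has only finitely many singly generated unital subrings, so $|V(\Lambda^1(R))| < \infty < |R|$. Theorem~\ref{thm:inf_ring_3} then rules out the alternative $|V(\Lambda^1(R))| = |R|$ and forces $R \cong \Z[\frac{1}{m}] \ltimes I$ for some positive integer $m$ and some finite $\Z[\frac{1}{m}]$-ring $I$. For the converse direction, if $R$ already has that form, then Theorem~\ref{thm:inf_ring_2} yields $|V(\Lambda^1(R))| \leq |V(\Lambda^1(\Z[\frac{1}{m}]))| \cdot |I| < \infty$, and via the bijection above this gives only finitely many unital subrings of $R$ generated by a single element.

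There is essentially no technical obstacle; the entire content of the corollary has already been proved as Theorems~\ref{thm:inf_ring_2} and \ref{thm:inf_ring_3}, and the only new ingredient needed is the (immediate) observation that classes in $\Lambda^1(R)$ are in bijection with singly generated unital subrings of $R$. The write-up can therefore be kept to a few lines, pointing at the bijection and then quoting Theorem~\ref{thm:inf_ring_3} in each direction.
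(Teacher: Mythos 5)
Your proposal is correct and follows exactly the paper's route: the paper derives this corollary from the observation that vertices of $\Lambda^1(R)$ are in bijection with unital subrings generated by one element, combined with Theorem~\ref{thm:inf_ring_3} (whose second alternative already records finiteness of the graph via Theorem~\ref{thm:inf_ring_2}). Nothing is missing.
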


Furthermore, we also obtain the following.

\begin{corollary}
 Let $R$ be a unital semidirect product $\Z[\frac 1m] \ltimes I$ for some positive integer $m$ and finite $\Z[\frac 1m]$-ring $I$.
 Then $R$ has finitely many unital subrings. 
\end{corollary}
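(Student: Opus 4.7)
The plan is to show that any unital subring $S$ of $R$ is determined by the collection of singly generated unital subrings it contains, and then to invoke the finiteness results proved earlier to bound the number of such collections.

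First I would observe the tautological identity
\[ S = \bigcup_{a \in S} \rng{a}_1, \]
valid for any unital subring $S \le R$. The inclusion $\supseteq$ holds because $\rng{a}_1 \subseteq S$ whenever $a \in S$ (unital subrings are closed under the operations and contain $1$), and the reverse inclusion is immediate since $a \in \rng{a}_1$. Hence $S$ is fully determined by the set $\mathcal{F}(S) = \{\rng{a}_1 \mid a \in S\}$, viewed as a subset of the collection $\mathcal{U}(R)$ of all unital subrings of $R$ generated by a single element.

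Next I would invoke the preceding corollary (which in turn rests on Theorem~\ref{thm:inf_ring_2} and Theorem~\ref{thm:inf_ring_3}): since $R \cong \Z[\tfrac{1}{m}] \ltimes I$ with $I$ finite, the ring $R$ has only finitely many singly generated unital subrings, i.e.\ $\mathcal{U}(R)$ is finite. Indeed, singly generated unital subrings are in bijection with vertices of $\Lambda^1(R)$, and the latter graph is finite.

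Combining the two observations, the assignment $S \mapsto \mathcal{F}(S) \subseteq \mathcal{U}(R)$ is injective on the set of unital subrings of $R$, and its codomain is the finite power set $2^{\mathcal{U}(R)}$. Therefore $R$ has at most $2^{|\mathcal{U}(R)|}$ unital subrings, which is finite. I do not anticipate a substantive obstacle here; the only thing to be careful about is the distinction between subrings generated by one element and arbitrary unital subrings, which is precisely why the union-of-cyclic-subrings identity is the key step.
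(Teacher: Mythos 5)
Your proof is correct and follows essentially the same route as the paper: both arguments reduce the problem to the finiteness of the set of singly generated unital subrings (the vertices of $\Lambda^1(R)$, finite by Theorem~\ref{thm:inf_ring_2}) and then bound the number of arbitrary unital subrings by $2^{|V(\Lambda^1(R))|}$. If anything, your identity $S=\bigcup_{a\in S}\rng{a}_1$ makes the injectivity of $S\mapsto\mathcal{F}(S)$, and hence the $2^n$ bound, slightly more explicit than the paper's phrasing in terms of omitting redundant generators.
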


\begin{proof}
    Let $S$ be an unital subring of $R$, generated by $s_1, s_2, \ldots s_n, \ldots$. If $\rng{s_i}_1 = \rng{s_j}_1$, then one of $s_i, s_j$ can be omitted from the set of generators of $S$. So $S$ is finally generated and the minimal number of generators of $S$ is at most the number of unital subrings of $R$ generated by one element. It follows that the number of unital subrings of $R$ is at most $2^n$ where $n = |V(\Lambda^1(R))|$ and is thus finite.
\end{proof}

\section{Infinite fields}\label{sec:fields}

In this section we consider (unital) compressed commuting graph of a field. Let $p$ be a prime and $n \ge 1$ an integer. We denote by $GF(p^n)$ the finite field with $p^n$ elements and by $d(n)$ the number of divisors of $n$. We recall the result about finite fields from \cite{BoDoKoSt23}.

\begin{theorem}\label{th:field}
Let $p$ be a prime and $n \ge 1$ an integer. Then the (unital) compressed commuting graph of the field $GF(p^n)$ is a complete graph on $d(n)$ respectively $d(n)+1$
vertices with all the loops, i.e.,
$$ \Lambda^1(GF(p^n)) \cong K_{d(n)}^\circ \quad\text{ and }\quad \Lambda(GF(p^n)) \cong K_{d(n)+1}^\circ. $$
\end{theorem}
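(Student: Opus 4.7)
The plan is to exploit commutativity first to reduce both graphs to counting vertices, and then to identify equivalence classes with subfields.

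Since $GF(p^n)$ is a commutative ring, any two of its elements commute, so both $\Lambda^1(GF(p^n))$ and $\Lambda(GF(p^n))$ are complete graphs with every vertex bearing a loop; hence the statement is purely a vertex-count. I would therefore first argue that $|V(\Lambda^1(GF(p^n)))|=d(n)$ and $|V(\Lambda(GF(p^n)))|=d(n)+1$, and the graph isomorphisms $K_{d(n)}^\circ$ and $K_{d(n)+1}^\circ$ will follow automatically.

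For the unital graph, I would first observe that for every $a\in GF(p^n)$ the unital subring $\rng{a}_1=\{q(a)\mid q\in\Z[x]\}$ is a finite subring of the field $GF(p^n)$, so it is a finite integral domain, hence a subfield, and therefore coincides with $\F_p(a)$. By the classical subfield structure of $GF(p^n)$ this subfield is $GF(p^k)$ for a unique $k\mid n$. Conversely, every divisor $k$ of $n$ is realized: any primitive element of $GF(p^k)\subseteq GF(p^n)$ generates $GF(p^k)$ as a unital subring. Since $a\sim_1 b$ if and only if $\rng{a}_1=\rng{b}_1$, the map $[a]_1\mapsto \rng{a}_1$ is a bijection between the vertex set and the set of subfields of $GF(p^n)$, giving $d(n)$ vertices.

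For the nonunital graph, the only extra step is to compare $\rng{a}$ with $\rng{a}_1$. If $a=0$ then $\rng{a}=\{0\}$, contributing one new equivalence class that did not appear in the unital setting. If $a\neq 0$, then $a$ lies in the multiplicative group of the subfield $\F_p(a)=GF(p^k)$, so $a^{p^k-1}=1$ and in particular $1\in\rng{a}$; consequently $\rng{a}$ contains the unital subring $\rng{a}_1=GF(p^k)$, and being contained in it as well, $\rng{a}=GF(p^k)$. Thus the nonzero equivalence classes under $\sim$ are in bijection with the divisors of $n$, exactly as in the unital case, and together with the class $[0]$ we get $d(n)+1$ vertices.

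The main obstacle I anticipate is essentially bookkeeping rather than a substantive difficulty: one must make sure that the extra class $[0]$ does not accidentally coincide with some $\rng{a}$ for nonzero $a$ (it cannot, since $a\in\rng{a}$) and that distinct divisors $k\mid n$ really produce distinct subfields (which they do by order considerations, since $|GF(p^k)|=p^k$). Both points are immediate, so the argument is essentially a direct translation of the subfield lattice of $GF(p^n)$ into the vertex set of the (unital) compressed commuting graph.
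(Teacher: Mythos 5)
Your proof is correct: the reduction to a pure vertex count via commutativity, the identification of each $\rng{a}_1$ with the subfield $\F_p(a)=GF(p^k)$ for a unique $k\mid n$ (every finite unital subring of a field being a subfield), the realization of every divisor by a primitive element, and the observation that $1\in\rng{a}$ whenever $a\neq 0$ (so that passing to the nonunital graph adds only the class $[0]$) together give exactly $d(n)$ and $d(n)+1$ vertices. Note that the present paper states this theorem without proof, merely recalling it from \cite{BoDoKoSt23}; your argument via the subfield lattice of $GF(p^n)$ is the natural one and is essentially the standard proof of that cited result.
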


We can now generalize this result to infinite fields. 

\begin{theorem}\label{thm:inf_field}
    If $\F$ is an infinite field, then $\Lambda(\F) \cong \Lambda^1(\F) \cong K_{|\F|}^\circ$.
\end{theorem}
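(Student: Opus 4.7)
The overall strategy is to note that a field is commutative, so both $\Lambda(\F)$ and $\Lambda^1(\F)$ are automatically complete graphs with loops on their vertex sets (all elements pairwise commute). Hence the entire content of the theorem is the cardinality statement: I need $|V(\Lambda(\F))| = |V(\Lambda^1(\F))| = |\F|$. The plan is to read off the first equality directly from Theorem~\ref{thm:inf_ring} and to handle the second by contradiction via the structural classification in Theorem~\ref{thm:inf_ring_3}.

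For $\Lambda(\F)$, since $\F$ is an infinite ring, Theorem~\ref{thm:inf_ring} immediately yields $|V(\Lambda(\F))|=|\F|$; combined with commutativity this gives $\Lambda(\F) \cong K_{|\F|}^\circ$.

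For $\Lambda^1(\F)$, suppose toward contradiction that $|V(\Lambda^1(\F))|\neq|\F|$. By Theorem~\ref{thm:inf_ring_3}, $\F$ must then be isomorphic to a unital semidirect product $\Z[\tfrac{1}{m}] \ltimes I$ for some positive integer $m$ and some finite $\Z[\tfrac{1}{m}]$-ring $I$. I will then argue that no such semidirect product is a field, giving the required contradiction. The key observation is that in any semidirect product $Z \ltimes I$ the set $\{0\}\times I$ is a two-sided ideal (this is immediate from the multiplication formula $(z_1,x_1)(z_2,x_2)=(z_1z_2,z_1 \cdot x_2+x_1 \cdot z_2+x_1x_2)$). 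So if $I\neq 0$, then $\F$ would contain a proper nonzero ideal, contradicting the field axiom. On the other hand, if $I=0$, the semidirect product collapses to $\Z[\tfrac{1}{m}]$, and this is never a field because any prime $p$ not dividing $m$ remains non-invertible in $\Z[\tfrac{1}{m}]$ (and such primes exist since $m$ has only finitely many prime divisors). Either case contradicts $\F$ being a field, so $|V(\Lambda^1(\F))|=|\F|$, and again commutativity upgrades this to $\Lambda^1(\F) \cong K_{|\F|}^\circ$.

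There is no real obstacle here: the structural Theorem~\ref{thm:inf_ring_3} does all the heavy lifting, and the only nontrivial step is the short algebraic check that $\Z[\tfrac{1}{m}] \ltimes I$ can never be a field. The slight subtlety worth stating explicitly is why $\Z[\tfrac{1}{m}]$ itself is not a field, namely the existence of primes outside the finite set of prime divisors of $m$.
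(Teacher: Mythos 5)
Your proposal is correct and follows essentially the same route as the paper: commutativity reduces everything to the cardinality claim, Theorem~\ref{thm:inf_ring} handles $\Lambda(\F)$, and the unital case is settled by observing that $\Z[\frac1m]\ltimes I$ is never a field (because $I$ is a proper ideal, forcing $I=0$, and $\Z[\frac1m]$ itself is not a field). Your extra remark about primes not dividing $m$ being non-invertible just makes explicit a step the paper leaves to the reader.
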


\begin{proof}
    Since $\F$ is commutative its (unital) compressed commuting graph is a complete graph with all the loops.
    Thus, we only need to prove that $|V(\Lambda(F))|=|V(\Lambda^1(\F))|=|\F|$.
    This follows from Theorems~\ref{thm:inf_ring} and \ref{thm:inf_ring_1}, since $\Z[\frac1m] \ltimes I$ cannot be a field.
    Indeed, $I$ would need to be $0$ since it is a proper ideal of $\Z[\frac1m] \ltimes I$, and $\Z[\frac1m]$ would need to be a field, which it is not.
\end{proof}

Next corollary shows that every complete graph with all the loops is realizable as a (unital) compressed commuting graph. 

\begin{corollary}
For any cardinal number $\alpha \ge 1$  there exist a ring $R$ such that its compressed commuting graph is a complete graph on $\alpha$ vertices $K_\alpha^\circ$ with all the loops. 
\end{corollary}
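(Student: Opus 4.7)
The plan is to split into three cases according to the size of $\alpha$ and in each case exhibit a ring whose compressed commuting graph is $K_\alpha^\circ$, drawing on Theorems~\ref{th:field} and \ref{thm:inf_field}. Since every vertex of $\Lambda(R)$ carries a loop and a field is commutative (hence its compressed commuting graph is automatically a complete graph with all loops), the only thing to control is the number of vertices, i.e.\ the number of equivalence classes of elements of $R$ under the relation $\sim$.

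For the case $\alpha = 1$ I would take $R$ to be the zero ring: it has a single element, hence a single $\sim$-class, and $\Lambda(R) \cong K_1^\circ$. For $2 \le \alpha < \aleph_0$ I would invoke Theorem~\ref{th:field}, which says $\Lambda(GF(p^n)) \cong K_{d(n)+1}^\circ$, and choose $n$ so that $d(n) = \alpha - 1$. Such an $n$ always exists, the simplest witness being $n = 2^{\alpha - 2}$, whose set of divisors is $\{1,2,4,\ldots,2^{\alpha-2}\}$, of size $\alpha - 1$; picking any prime $p$ then yields $\Lambda(GF(p^n)) \cong K_\alpha^\circ$.

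For infinite $\alpha$ I would apply Theorem~\ref{thm:inf_field}, which reduces the problem to producing a field $\F$ with $|\F| = \alpha$. For $\alpha = \aleph_0$ the field $\Q$ works, and for uncountable $\alpha$ one can take $\F = \Q(X)$, the field of rational functions in a set $X$ of variables with $|X| = \alpha$: a standard cardinal arithmetic count shows $|\Q[X]| = \alpha$, hence $|\Q(X)| = \alpha$, and Theorem~\ref{thm:inf_field} gives $\Lambda(\F) \cong K_{|\F|}^\circ = K_\alpha^\circ$.

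There is no real obstacle here; the argument is essentially a case-by-case assembly of already proven realization results, and the only minor technical point is verifying the existence of an integer $n$ with a prescribed number of divisors (handled by $n = 2^{\alpha-2}$) and of a field of each prescribed infinite cardinality (handled by the rational function field construction).
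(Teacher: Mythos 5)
Your proof is correct and follows essentially the same route as the paper: the zero ring for $\alpha=1$, a finite field $GF(p^{2^{\alpha-2}})$ chosen so the divisor count matches for finite $\alpha\ge 2$, and a rational function field in $\alpha$ variables together with Theorem~\ref{thm:inf_field} for infinite $\alpha$. The only cosmetic differences are the exponent (the paper uses $n=2^{\alpha-1}$, targeting $\Lambda^1$ rather than $\Lambda$) and the base field in the infinite case ($\Q$ versus a finite field), neither of which affects the argument.
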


\begin{proof}
If $\alpha$ is finite, let $n= 2^{\alpha-1}$  and $R=GF(p^n)$. Then $d(n) = \alpha$, so $\Lambda^1(R) \cong K_\alpha^\circ$ and $\Lambda(R) \cong K_{ \alpha+1}^\circ$ by Theorem \ref{th:field}. Furthermore, $\Lambda^1(\{0\}) \cong \Lambda(\{0\}) \cong K_1^\circ$. If $\alpha$ is infinite, let $X$ be a set with $|X| = \alpha$, $F$ a finite field and $\F = F(X)$ the field of fractions of the polynomial ring $F[X]$. Since $X$ is infinite, we have $|F(X)| = |X|= \alpha$ by Axiom of choice. Then $\Lambda(\F) \cong \Lambda^1(\F) \cong K_{|\F|}^\circ$ by Theorem \ref{thm:inf_field}.
\end{proof}

\section*{Acknowledgments}
The support by the bilateral grant BI-BA/24-25-024 of the ARIS (Slovenian Research and Innovation Agency) is gratefully acknowledged. Damjana Kokol Bukovšek and Nik Stopar acknowledge financial support from the ARIS (research core funding No. P1-0222 and project J1-50002). Ivan-Vanja Boroja acknowledges financial support from the Municipality of Mrkonjić Grad.

\bibliographystyle{amsplain}
\bibliography{biblio}

\end{document}